\newcommand{\virgolette}[1]{``#1''}
\newcommand{\B}{\mathbb{B}}
\newcommand{\R}{\mathbb{R}}
\newcommand{\N}{\mathbb{N}}
\newcommand{\M}{\langle M \rangle}
\newcommand{\cL}{\mathcal{L}}
\newcommand{\cF}{\mathcal{F}}
\newcommand{\cC}{\mathcal{C}}
\newcommand{\cD}{\mathcal{D}}
\newcommand{\cK}{\mathcal{K}}
\newcommand{\cG}{\mathcal{G}}
\newcommand{\cV}{\mathcal{V}}
\newcommand{\cP}{\mathcal{P}}
\newcommand{\cA}{\mathcal{A}}
\DeclareMathOperator{\co}{conv}
\DeclarePairedDelimiterX{\inp}[2]{\langle}{\rangle}{#1, #2}
\DeclareMathOperator{\ssquare}{\scriptstyle\square}
\DeclareMathOperator{\inn}{int}
\newtheorem{thm}{Theorem}
\newtheorem{defn}{Definition}
\newtheorem{lem}{Lemma}
\newtheorem{exmp}{Example}
\newtheorem{prop}{Proposition}
\newtheorem{cor}{Corollary}
\newtheorem{assum}{Assumption}
\newtheorem{rem}{Remark}
\newtheorem{proper}{Properties}
\begin{document}

\title{Comparison of Path-Complete Lyapunov Functions via Template-Dependent Lifts}

\author{Virginie Debauche \and Matteo Della Rossa \and Raphaël M. Jungers}

\maketitle

\begin{abstract}

This paper investigates, in the context of discrete-time switching systems, the problem of comparison for path-complete stability certificates. We introduce and study abstract operations on path-complete graphs, called \emph{lifts}, which allow us to recover previous results in a general framework. Moreover, this approach highlights the existing relations between the analytical properties of the chosen set of candidate Lyapunov functions (the \emph{template}) and the admissibility of certain lifts. This provides a new methodology for the characterization of the order relation of path-complete Lyapunov functions criteria, when a particular template is chosen. We apply our results to specific templates, notably the sets of primal and dual copositive norms, providing new stability certificates for positive switching systems. These tools are finally illustrated with the aim of numerical examples.
\end{abstract}

\section{Introduction}
Switching systems not only provide a model for several physical/engineering phenomena \cite{Shorten2006,Donkers11}, but they also offer several challenging problems from a theoretic point of view \cite{Liberzon99,Jung09}. In this paper, we focus on \emph{discrete-time switching systems} of the form
\begin{equation}\label{Def:DiscreteSwitchingSystem}
x(k+1)~=~f_{\sigma(k)}(x(k)),
\end{equation}
\noindent where at time $k \in \N$, the state $x(k)$ lies in $\R^n$, and the switching signal $\sigma : \N \to \{1, \dots, M\}$ associates one of the \emph{dynamics} $F = \{ f_1, \dots, f_M\} \subseteq \cC^0(\R^n,\R^n)$ at each time step $k$. Their study offers several theoretical challenges among which the stability problem has especially attracted the attention of many researchers. In this paper, we study certificates that guarantee that the system~(\ref{Def:DiscreteSwitchingSystem}) is stable \emph{under arbitrary switching}, i.e. there exists a function $\alpha$ of class $\cK_\infty$\footnote{A function $\alpha : \R_{\geq 0} \to \R_{\geq 0}$ is of class $\cK_\infty$ ($\alpha \in \cK_\infty$) if it is continuous, $\alpha(0) = 0$, strictly increasing and unbounded.} such that for any switching signal $ \sigma : \N \to \{1, \dots, M\}$,
\[ \forall x(0) \in \R^n,  \forall k \in \N:~ \left\| x(k)\right\|~\leq~ \alpha \left(  \left\| x(0) \right\| \right). \]

One of the possible ways to assess the stability of switching systems is to use the \emph{Lyapunov theory}, and the \emph{common Lyapunov functions} (CLF) in particular. This approach consists in finding a single positive definite function that decreases along any dynamics of the system. The \emph{template}, i.e. the set in which the candidate CLF is searched, has evolved over time. One popular approach considering quadratic functions has been generalized, for example, by considering sum-of-square polynomials~\cite{AJSOS:18}, polyhedral Lyapunov functions \cite{BM1999} and then the max-min of quadratics \cite{GTHL2006}. Although the existence of a CLF is a necessary and sufficient condition for stability (see for example \cite{Jung09} and \cite{KT:2004} for the non-linear case), it is largely offset by the computing complexity required by the \virgolette{search} of this Lyapunov function. See for instance, the discussions provided in \cite{AJSOS:18}. Therefore, the \emph{Multiple Lyapunov functions} approach stands out as a promising alternative, as introduced in \cite{BRa:1998},~\cite{GoeHu06} and~\cite{Lib03} for instance. This approach aims to find (rather than a single function) a \emph{set of Lyapunov functions} whose \emph{joint decrease behaviour} provides a stability certificate. Motivated by the growing popularity of these techniques, Ahmadi et al. \cite{AJPR:14} introduced the unifying notion of \emph{path-complete Lyapunov functions} (PCLF) for which the multiple Lyapunov inequalities are encoded by the edges of a directed and labeled graph. Formally, the PCLF framework involves both combinatorial and algebraic components: first, a graph that describes the set of Lyapunov inequalities and that must be \emph{path-complete} in the sense that it captures every finite switching sequence, and then a set of candidate Lyapunov functions, called a \emph{template}, among which a solution is sought. 

The \emph{path-complete Lyapunov functions} framework provides new guidelines for constructing stability algorithms but it opens new questions and challenges, both from a theoretical and computational point of view. Indeed, the theory allows to use different graphs and different templates of functions, and thus provides a wide range of possibilities. However, it is not well understood yet when one of these algorithms provides less conservative stability certificates than another one, which has led to the problem of \emph{comparing} different path-complete graphs. More precisely, a graph is said to be \virgolette{better} than another one when its decay rate approximation capabilities surpass those of the other graph (in a sense that we will make precise in Definition~\ref{Def:order_relation_between_graphs} below). Some comparison techniques
have already been proposed (see \cite{AJPR:14}, \cite[Definition IV.2.]{PhiAth19}), but they only apply to very particular settings. In \cite{PJ:19}, a complete combinatorial characterization is proposed in a generic setting, and thus without relying on any particular property of the chosen template of candidate Lyapunov functions. On the other hand it has been observed, since the introduction of this framework~\cite{AJPR:14}, that the order relations between path-complete stability criteria strongly depend on the chosen set of candidate Lyapunov functions. On another note, sufficient conditions \cite{PEDJ:16} have already been provided in the context of \emph{constrained switching systems}, and they rely on combinatorial operations on graphs called \emph{lifts} that maintain the path-completeness. While these previous abstract lifts were introduced in order to reduce the conservatism of the arising stability condition, in this paper instead we understand how these tools can be used to characterize the comparison of path-complete graphs in the sense that all the known comparison relations can be expressed in terms of lift. In this work, moreover, we merge previous formalisms in a unifying framework based on graph operations whose validity exploits the template and the dynamics properties.

More specifically, in this paper, we propose a systematic way to compare different path-complete \virgolette{stability certificates}, based on the notion of lifts. In order to explicitly exploit the analytical properties of the chosen template and dynamics, we introduce new abstract lifts related with the aforementioned properties, providing further insight for the comparison problem. As a first application setting, we consider \emph{positive switching systems}, i.e. systems that leave the positive cone $\R^n_{\geq 0}$ invariant. In the linear case, the stability analysis of this kind of systems is characterized by their \emph{joint spectral radius} (JSR) whose approximation has been studied  for years (see \cite{Jung09} for a survey). In this paper, we consider the templates of  \emph{primal} and \emph{dual linear copositive functions} (already considered in \cite{MasSho07}), studying their analytical properties and the corresponding lifts. As final by-product of our techniques, we propose a new hierarchy of linear programs (based on path-complete Lyapunov certificates using primal and dual norms templates) in order to approximate the JSR of a set of nonnegative matrices up to an arbitrary accuracy.

The rest of this manuscript is organized as follows. First, we recall the main ideas of the path-complete Lyapunov functions framework, and we introduce the order relations among path-complete graphs. Then, we tackle the problem of their comparison thanks to the introduction of the notion of \emph{lifts} and we discuss their validity. We introduce two classes of lifts: the \emph{template-dependent lifts} as the $T$-sum lift and the min/max lifts for instance, and the \emph{template and dynamics-dependent lifts} whose validity depends on the template and dynamics properties. In Section~\ref{sec:Positive}, we focus on the family of \emph{positive linear switching systems} and the template of \emph{copositive linear functions} to illustrate the theory of the previous sections, and we develop a new numerical hierarchy to approximate the JSR. Finally, we provide a numerical example illustrating our results. Important properties from convex analysis (taken from \cite{RockConv}) are summarized, without proof, in~\ref{Sec:Appendix}. \\

\noindent {\bf Notation:} Given $M \in \N$, we denote $\langle M \rangle := \{1, \hdots, M\}$. Given $n\in \N$, $\cC^0(\R^n,\R^n)$ denotes the set of continuous vector fields on $\R^n$, while we denote by $\cC^0_+(\R^n,\R)$ the set of continuous, positive definite and radially unbounded functions. The set $\{\mathbf{e}_i\}_{i\in \langle n \rangle}$ is the canonical basis of $\R^n$.

\section{Preliminaries}

The path-complete Lyapunov functions framework generalizes previous Lyapunov techniques for discrete-time switching systems, see \cite{AJPR:14} and \cite{PhiAth19} for a thorough discussion on this topic. In what follows we briefly recall the main definitions and statements. The combinatorial structure of the Lyapunov inequalities is encoded in this setting, in a directed and labeled graph $\cG = (S,E)$ on $\langle M \rangle$ where $S$ is the finite set of nodes and $E \subseteq S \times S \times \langle M \rangle$ is the set of labeled edges. The crucial property to have effective stability criteria is then defined in the following statement.

\begin{defn}[Path-complete graph]\label{defn:Path-Completeness}
Given $M\in \N$, a graph $\cG=(S,E)$ is \emph{path-complete} on $\M$ if, for any $K \geq 1$ and any sequence $\sigma = (j_1\,\dots\,j_K)\in \M^K$, there exists a \emph{path}  $\{(a_k,a_{k+1},j_k)\}_{k=1, \dots,K}$ such that $(a_k,a_{k+1},j_k)\in E$, for each $1\leq k\leq K$.
\end{defn}

Note that the most trivial path-complete graph is the \emph{common Lyapunov function graph}, denoted by $\cG_0$, with one node and as many loops as the number of modes $M$, i.e. $\cG_0:= \left( \{a\},\{ (a,a,i) \mid i \in \langle M \rangle \}\right)$. Given a graph $\cG=(S,E)$, its \emph{dual graph} $\cG^\top=(S', E')$ is defined by $S'=S$ and $ (a,b,j)\in E \;\;\Leftrightarrow\;\;(b,a,j)\in E'$, i.e., reversing the direction of each edge. One easily verifies that a graph $\cG$ is path-complete if and only if its dual graph $\cG^\top$ is path-complete. Otherwise, there exist well-known algorithms \cite{HMU2001} to check whether a graph is path-complete; for some particular classes of graphs, it is obvious as for instance complete and co-complete graphs.

\begin{defn}[Complete and co-complete graphs]\label{def:ComplateGraph}
A graph $\cG=(S,E)$ on the alphabet $\langle M \rangle$ is \emph{complete} if for all $a \in S$, for all $i \in \langle M \rangle$, there exists at least one node $b \in B$ such that the edge $(a,b,i) \in E$. The graph is \emph{co-complete} if for all $b \in S$, for all $i \in \langle M \rangle$, there exists at least one node $a \in S$ such that the edge $(a,b,i) \in E$.
\end{defn}

The purpose of path-complete graphs stems from the following definition where the edges of a graph encode inequalities, and the path-completeness ensures that all the switching signals are covered.

\begin{defn}[Path-complete Lyapunov function]\label{defn:PCLF}
Given a switching system $F = \{ f_1, \dots, f_M\}\subset \cC^0(\R^n, \R^n)$ of dimension $n \in \N$, a \emph{path-complete Lyapunov function} (PCLF) for $F$ is a pair $(\cG,V)$ where $\cG = (S,E)$ is a path-complete graph, and $V = \{ V_s \mid s \in S \} \subseteq \cC^0_+(\R^n ,\R)$  such that the following inequalities are satisfied: 
\begin{equation}
\forall \, (a,b,i) \in E, \: \forall x \in \mathbb{R}^n: \: V_b(f_i(x))~\leq~V_a(x).
\label{eq:LyapunovInequalities}
\end{equation}

\noindent If this is the case, we say that $V$ is \emph{admissible for $\cG$ and $F$}, and we denote it by $V\in PCLF(\cG,F)$.
\end{defn}

\begin{thm}[Theorem 2.5 in \cite{PJ:19}] \label{Thm:PCLFimpliesStability}
Consider a discrete-time switching system defined by $F = \{ f_1, \dots, f_M\}\subset \cC^0(\R^n, \R^n)$. If there exists a path-complete Lyapunov function $(\cG,V)$ for $F$, then the switching system is stable.
\end{thm}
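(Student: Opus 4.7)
The plan is to fix a switching signal $\sigma : \N \to \M$ and an initial state $x(0) \in \R^n$, and exhibit a bound on $\|x(K)\|$ in terms of $\|x(0)\|$ by a class $\cK_\infty$ function independent of $\sigma$ and $K$. For each $K \geq 1$, the word $\sigma(0)\sigma(1)\cdots \sigma(K-1) \in \M^K$ is, by Definition~\ref{defn:Path-Completeness}, realized by some path $\{(a_k, a_{k+1}, \sigma(k))\}_{k=0}^{K-1}$ in $\cG$. Applying the Lyapunov inequality~(\ref{eq:LyapunovInequalities}) along this path to the iterate $x(k)$ and using $x(k+1) = f_{\sigma(k)}(x(k))$, I telescope to obtain
\[
V_{a_K}(x(K))~\leq~V_{a_{K-1}}(x(K-1))~\leq~\cdots~\leq~V_{a_0}(x(0)).
\]

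Next I translate this into a norm bound uniform over $S$. Since each $V_s \in \cC^0_+(\R^n, \R)$ is continuous, positive definite and radially unbounded, a standard argument produces $\cK_\infty$ functions $\underline{\alpha}_s, \overline{\alpha}_s$ such that $\underline{\alpha}_s(\|x\|) \leq V_s(x) \leq \overline{\alpha}_s(\|x\|)$ for every $x \in \R^n$. Because $S$ is finite, $\underline{\alpha} := \min_{s \in S} \underline{\alpha}_s$ and $\overline{\alpha} := \max_{s \in S} \overline{\alpha}_s$ are still in $\cK_\infty$, and sandwiching the endpoints of the telescoped chain yields
\[
\underline{\alpha}(\|x(K)\|)~\leq~V_{a_K}(x(K))~\leq~V_{a_0}(x(0))~\leq~\overline{\alpha}(\|x(0)\|),
\]
so $\|x(K)\| \leq \alpha(\|x(0)\|)$ with $\alpha := \underline{\alpha}^{-1} \circ \overline{\alpha} \in \cK_\infty$. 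This holds for every $K \in \N$, every $x(0) \in \R^n$, and every switching signal $\sigma$, which is exactly the stability property recalled in the introduction.

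The only delicate point is the construction of the lower bound $\underline{\alpha}_s$: the natural candidate $r \mapsto \inf_{\|x\| \geq r} V_s(x)$ is nondecreasing, positive for $r > 0$, and tends to $+\infty$ as $r \to \infty$ by radial unboundedness, but is not a priori continuous nor strictly increasing, so one regularizes it from below by a $\cK_\infty$ function (and symmetrically dominates $\sup_{\|x\| \leq r} V_s(x)$ from above). Once this standard smoothing step is in hand, everything else is immediate: the telescoping uses only path-completeness and the edgewise Lyapunov inequality, and taking uniform $\min$/$\max$ over $S$ is crucial because the path endpoints $a_0, a_K$ depend on both $K$ and $\sigma$, so one has no a priori control over which nodes of $\cG$ are visited.
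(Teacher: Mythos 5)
Your proof is correct and is essentially the standard argument that the paper itself defers to (it cites \cite{PJ:19} and \cite{AJPR:14} rather than reproving the result): path-completeness furnishes, for each horizon $K$, a path realizing the switching word, the edgewise inequalities telescope to $V_{a_K}(x(K)) \leq V_{a_0}(x(0))$, and the finiteness of $S$ lets you take uniform class-$\cK_\infty$ sandwich bounds to conclude. Your attention to the fact that the endpoints $a_0, a_K$ vary with $K$ and $\sigma$, and to the regularization needed for the lower comparison function, covers the only non-trivial points.
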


Given a switching system (\ref{Def:DiscreteSwitchingSystem}), Theorem~\ref{Thm:PCLFimpliesStability} states that the existence of a path-complete Lyapunov function is a sufficient condition for stability (see \cite{AJPR:14} and \cite{PJ:19} for the formal proof). Note that we develop here the theory for stability because we are working in a general nonlinear setting. In the linear case i.e., considering a set of linear subsystems defined by $\cA = \{ A_i \mid i \in \langle M \rangle \} \subset \R^{n \times n}$, this framework also provides estimates on the \emph{joint spectral radius (JSR)} of $\cA$, defined by
\begin{equation} \label{Eq:JSR}
\rho(\cA) ~ := ~ \lim_{k \to \infty} \max_{\overline i \in \langle M \rangle^k} \left\| A_{\overline i_k} \cdots A_{\overline i_2} A_{\overline i_1} \right\|^{1/k},
\end{equation}
which represents the best decay rate of the linear switching system defined by $\cA$. It is well-known that in this case, the stability is characterized by the JSR and it amounts to requiring that $\rho(\cA) \leq 1$. Unfortunately, this question is undecidable. For this reason, one usually focuses on computing upper (and lower) bounds of the JSR. See \cite{Jung09} for a complete discussion on this topic. \\

The path-complete Lyapunov functions framework generates a wide range of Lyapunov stability certificates since it provides two degrees of freedom: the path-complete graph $\cG$ and the template $\cV$ of candidate Lyapunov functions. Formally, we define a \emph{template} as a family of countably many sets of Lyapunov functions of fixed dimension, i.e. 
\[ \cV~:=~\bigcup_{n \in \N} \cV_n\]
where $\cV_n \subseteq \cC^0_+(\R^n,\R)$. This definition allows us to consider classic Lyapunov functions for instance, as the quadratic ones. In this case, the set $\cV_n$ for $n \in \N$ will contain all the quadratic functions $V(x) = x^\top P x$ with $P \in \R^{n \times n}$ positive definite. In what follows, we introduce order relations among path-complete graphs, formalizing the idea that one graph \virgolette{provides less conservative stability certificates} with respect to another.

\begin{defn}[Order relation between graphs] Consider two path-complete graphs $\cG$ and $\widetilde{\cG}$ on $\M$, a set of candidate Lyapunov functions $\mathcal{V}$ (a template) and a family $\mathcal{F}$ of $M$-tuples of continuous vector fields. 
\begin{enumerate}[label=(\alph*)]
\item We say that 
\begin{equation} 
\cG~\leq_{\mathcal{V}, \mathcal{F}}~\widetilde{\cG}
\label{Def:G1_smaller_G2_V_F_fixed}
\end{equation}
if, for any $F \in \mathcal{F}$, 
\begin{equation} \label{Eq:VimpliesW}
\left[ \exists V\subseteq \cV \text{ s.t. }V\in PCLF(\cG,F)\right]\: \Rightarrow \: \left[\exists W\subseteq \cV\text{ s.t. }W\in PCLF(\widetilde{\cG},F)\right].
\end{equation}
\item We say that
\begin{equation}
\cG~\leq_{\mathcal{V}}~\widetilde{\cG}
\label{Def:G1_smaller_G2_V_fixed}
\end{equation}
if the inequality (\ref{Def:G1_smaller_G2_V_F_fixed}) is satisfied for $\cF=\bigcup_{n\in \N} \cC^0(\R^n,\R^n)^M$.
\item We say that 
\begin{equation}
\cG~\leq~\widetilde{\cG}
\label{Def:G1_smaller_G2}
\end{equation}
if for any template $\mathcal{V}$, the inequality (\ref{Def:G1_smaller_G2_V_fixed}) is satisfied.
\end{enumerate}
\label{Def:order_relation_between_graphs}
\end{defn}

\begin{prop}\label{Prop:InegalityHoldsForSubgraph}
Consider two path-complete graphs $\cG$ and $\widetilde{\cG}$, a template $\cV$ and a family $\cF$ such that $\cG~\leq_{\cV,\cF}~\widetilde{\cG}$. Then, for any path-complete component $\cG'$ of $\widetilde{\cG}$, 
\[ \cG~\leq_{\cV,\cF}~\cG'. \]
The same result holds for the relations~(\ref{Def:G1_smaller_G2_V_fixed}) and (\ref{Def:G1_smaller_G2}).
\end{prop}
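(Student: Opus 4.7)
The plan is to argue that a PCLF for the larger graph $\widetilde{\cG}$ restricts canonically to any path-complete subgraph $\cG'$, so the implication needed for $\cG \leq_{\cV,\cF} \cG'$ follows immediately from the one guaranteed by the hypothesis $\cG \leq_{\cV,\cF} \widetilde{\cG}$.

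More concretely, I would first interpret \emph{path-complete component of $\widetilde{\cG}$} as a subgraph $\cG' = (S',E')$ of $\widetilde{\cG} = (\widetilde{S},\widetilde{E})$ with $S'\subseteq \widetilde{S}$ and $E' \subseteq \widetilde{E}$ that is itself path-complete on $\M$ in the sense of Definition~\ref{defn:Path-Completeness}. Then fix any $F \in \cF$ and suppose there exists $V \subseteq \cV$ such that $V \in PCLF(\cG,F)$. By the standing assumption $\cG \leq_{\cV,\cF} \widetilde{\cG}$, there is some $W = \{W_s \mid s \in \widetilde{S}\} \subseteq \cV$ with $W \in PCLF(\widetilde{\cG},F)$.

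Next, I would define the restriction $W' := \{W_s \mid s \in S'\}$. Since $W' \subseteq W \subseteq \cV$ and every edge $(a,b,i)\in E'$ also lies in $\widetilde{E}$, the Lyapunov inequalities in~\eqref{eq:LyapunovInequalities} for $W$ on $\widetilde{\cG}$ remain valid for $W'$ on $\cG'$. Together with path-completeness of $\cG'$, this shows $W' \in PCLF(\cG',F)$, establishing the implication~\eqref{Eq:VimpliesW} for the pair $(\cG,\cG')$ and concluding $\cG \leq_{\cV,\cF} \cG'$. The statements for the relations~\eqref{Def:G1_smaller_G2_V_fixed} and~\eqref{Def:G1_smaller_G2} follow as special cases: $\leq_\cV$ corresponds to the choice $\cF = \bigcup_n \cC^0(\R^n,\R^n)^M$, and $\leq$ quantifies over all templates, so the same restriction argument applies verbatim.

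I do not expect a genuine obstacle here; the proof is essentially a bookkeeping observation that admissibility on a graph is preserved by passing to a path-complete subgraph, since the Lyapunov inequalities are indexed by edges and only the functions attached to the retained nodes are used. The only point requiring care is to formalize what is meant by \virgolette{path-complete component} and to ensure the restricted family $W'$ still belongs to $\cV$, which is immediate because $\cV$ is a template (a set closed under taking sub-collections of functions drawn from it, by construction).
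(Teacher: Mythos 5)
Your proof is correct and follows exactly the paper's (one-line) argument: since the edge set of a path-complete component $\cG'$ is a subset of $\widetilde{E}$, the restriction of an admissible $W$ for $\widetilde{\cG}$ to the nodes of $\cG'$ remains admissible, and the implication~\eqref{Eq:VimpliesW} transfers. No gap; the extra care you take in formalizing \virgolette{component} and the restriction $W'\subseteq\cV$ is fine but not needed beyond what the paper already asserts.
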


The proof of Proposition~\ref{Prop:InegalityHoldsForSubgraph} is straightforward, since by definition the set of inequalities encoded by $\cG'$ is a subset of the inequalities encoded by $\widetilde{\cG}$. A second property follows directly from Definition~\ref{Def:order_relation_between_graphs} and involves the common Lyapunov function graph $\cG_0$.

\begin{prop}\label{Prop:InequalityCLFGraph}
Consider a path-complete graph $\cG$ and the common Lyapunov function graph $\cG_0$. Then, for any template $\cV$ and any family $\cF$, 
\[ \cG_0~\leq_{\cV,\cF}~\cG.\] 
The same result holds for the relations~(\ref{Def:G1_smaller_G2_V_fixed}) and (\ref{Def:G1_smaller_G2}).
\end{prop}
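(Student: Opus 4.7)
The plan is to observe that a common Lyapunov function is the strongest possible certificate among PCLFs: if a single function works on the trivial graph $\cG_0$, then assigning that same function to every node of any path-complete graph $\cG$ trivially satisfies all Lyapunov inequalities encoded by $\cG$'s edges.

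More concretely, I would fix an arbitrary $F \in \cF$ and assume there exists $V \subseteq \cV$ with $V \in PCLF(\cG_0,F)$. Since $\cG_0$ has a single node, say $a$, the family $V$ consists of a single function $V_a \in \cV$ satisfying
\[ V_a(f_i(x)) \leq V_a(x), \quad \forall i \in \langle M \rangle,\; \forall x \in \R^n, \]
by Definition~\ref{defn:PCLF} applied to the self-loops $(a,a,i)$ of $\cG_0$.

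Now, given $\cG = (S,E)$, I define $W = \{W_s \mid s \in S\}$ by setting $W_s := V_a$ for every $s \in S$. Since $V_a \in \cV$, we have $W \subseteq \cV$. For any edge $(s,t,i) \in E$, we compute
\[ W_t(f_i(x)) = V_a(f_i(x)) \leq V_a(x) = W_s(x), \]
so the Lyapunov inequalities~(\ref{eq:LyapunovInequalities}) are satisfied on every edge of $\cG$. Hence $W \in PCLF(\cG,F)$, which establishes the implication~(\ref{Eq:VimpliesW}) for the pair $(\cG_0,\cG)$. Since $F$ was arbitrary in $\cF$, this yields $\cG_0 \leq_{\cV,\cF} \cG$.

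No step is a real obstacle here; the proof is immediate once one notes that $\cG_0$'s self-loops already encode the full set of common Lyapunov inequalities, which is the strongest decrease condition one can impose. The statements for the relations~(\ref{Def:G1_smaller_G2_V_fixed}) and~(\ref{Def:G1_smaller_G2}) then follow without modification, since the construction of $W$ is independent of both the specific template $\cV$ and the specific family $\cF$.
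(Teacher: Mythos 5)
Your proof is correct and matches the paper's intent: the paper states that this proposition follows directly from Definition~\ref{Def:order_relation_between_graphs}, and your argument---assigning the single common Lyapunov function $V_a$ to every node of $\cG$ and checking each edge inequality against the self-loops of $\cG_0$---is precisely that direct verification. No issues.
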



In \cite[Theorem 3.5]{PJ:19}, a complete characterization of the general order relation (\ref{Def:G1_smaller_G2}) is provided and relies on the combinatorial tool of \emph{simulation}\footnote{A graph $\cG = (S,E)$ \emph{simulates} a graph $\widetilde{\cG} = (\widetilde{S},\widetilde{E})$ if there exists a function $R : \widetilde{S} \to S$ such that $\forall(a,b,i) \in \widetilde{E}: (R(a),R(b),i) \in E$.}. 

\begin{thm}[Theorem 3.5 in \cite{PJ:19}] \label{Thm:Simulation}
Consider two path-complete graphs $\cG = (S,E)$ and $\widetilde{\cG} = (\widetilde{S},\widetilde{E})$. The following statements are equivalent: 
\begin{enumerate}
\item[(1)] $\cG$ simulates $\widetilde{\cG}$.
\item[(2)] $\cG \leq \widetilde{\cG}$ in the sense of Definition~\ref{Def:order_relation_between_graphs}(c).
\end{enumerate}
\end{thm}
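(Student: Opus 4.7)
My plan is to prove the two implications separately; the forward direction is essentially automatic, while the converse requires constructing a bespoke witness.

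For $(1) \Rightarrow (2)$, I would start from an arbitrary template $\cV$, an $M$-tuple of dynamics $F$, and an admissible family $V = \{V_s\}_{s\in S} \in PCLF(\cG,F)$ with $V \subseteq \cV$. Using the simulation map $R : \widetilde{S} \to S$, I define $W_{\tilde{s}} := V_{R(\tilde{s})}$ for every $\tilde{s} \in \widetilde{S}$. Each $W_{\tilde{s}}$ is a relabeling of some element of $V \subseteq \cV$, so $W \subseteq \cV$. For any edge $(\tilde a, \tilde b, i) \in \widetilde{E}$, the simulation property yields $(R(\tilde a), R(\tilde b), i) \in E$; applying the Lyapunov inequality for $V$ along this edge gives $W_{\tilde b}(f_i(x)) = V_{R(\tilde b)}(f_i(x)) \leq V_{R(\tilde a)}(x) = W_{\tilde a}(x)$, so $W \in PCLF(\widetilde{\cG},F)$, which is exactly the content of $\cG \leq \widetilde{\cG}$.

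For $(2) \Rightarrow (1)$, I would argue by contrapositive: suppose no simulation $R : \widetilde{S} \to S$ exists. I need to exhibit a single pair $(\cV, F)$ witnessing $\cG \not\leq \widetilde{\cG}$. The idea is to design a \emph{universal} witness from $\cG$ itself. Take $n = |S|$, identify each $s \in S$ with the canonical basis vector $\mathbf{e}_s \in \R^n$, and pick a (nonlinear) switching system whose modes move canonical basis vectors in accordance with the edges of $\cG$, for instance sending $\mathbf{e}_a$ to a strictly contracted $\mathbf{e}_b$ whenever $(a,b,i) \in E$ and extending continuously off the basis. The template $\cV$ would be designed to contain candidate ``tent-like'' functions $\{V_s\}_{s \in S}$ concentrated around each basis direction, so that by construction $V = \{V_s\}_{s\in S} \in PCLF(\cG,F)$.

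The delicate step is then to show that any hypothetical admissible family $W = \{W_{\tilde s}\}_{\tilde s \in \widetilde{S}} \in PCLF(\widetilde{\cG},F)$ drawn from $\cV$ induces a well-defined map $R : \widetilde{S} \to S$, obtained by matching each $W_{\tilde s}$ with the unique $V_{R(\tilde s)}$ sharing its peak direction, and that this map necessarily satisfies the simulation property with respect to $\widetilde{E}$ and $E$. This is the main obstacle: the dynamics and the template must be crafted rigidly enough that admissibility on $\widetilde{\cG}$ forces node identifications compatible with every edge of $\widetilde{E}$, yet flexibly enough that $V$ itself is admissible for $\cG$. Once this balance is achieved, the non-existence of a simulation contradicts the existence of such a $W$, yielding $\cG \not\leq \widetilde{\cG}$ and closing the contrapositive.
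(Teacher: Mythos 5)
First, a point of reference: the paper does not prove Theorem~\ref{Thm:Simulation} at all --- it is imported verbatim from \cite{PJ:19} (Theorem 3.5 there) --- so the comparison below is against what a complete proof must contain. Your direction $(1)\Rightarrow(2)$ is correct and complete: pulling back the admissible family along the simulation map, $W_{\tilde s}:=V_{R(\tilde s)}$, transports every inequality encoded by $\widetilde{\cG}$ to an inequality of $\cG$ that holds by hypothesis, and $W\subseteq V\subseteq\cV$. This is the standard argument and needs no further work.

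The converse $(2)\Rightarrow(1)$ is where the entire difficulty of the theorem lives, and your proposal does not prove it: you describe the shape of a witness $(\cV,F)$ and then state, as an unachieved goal, the two properties it must have. Naming the obstacle is not the same as overcoming it. Concretely, two things are missing. (i) The template must be made rigid: unless $\cV_n$ is restricted to (essentially) the finite set $\{V_s\}_{s\in S}$, an admissible $W_{\tilde s}\in\cV$ need not ``share a peak direction'' with any $V_s$, and your map $R$ is not even defined; and if you do restrict $\cV_n$ this way, you must still verify that no \emph{other} assignment $\widetilde S\to S$ of these functions happens to satisfy the inequalities of $\widetilde{\cG}$. (ii) You must arrange that $V_b\circ f_i\leq V_a$ holds \emph{exactly} when $(a,b,i)\in E$; the ``only if'' half is what forces $R$ to respect edges, and it is in tension with the ``if'' half. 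A node $a$ may have several outgoing $i$-labelled edges $(a,b_1,i),(a,b_2,i)\in E$, so the single point $f_i(\mathbf{e}_a)$ must simultaneously look small to $V_{b_1}$ and $V_{b_2}$ yet large to every $V_c$ with $(a,c,i)\notin E$; sending $\mathbf{e}_a$ to a contracted $\mathbf{e}_b$ cannot achieve this when there are two such $b$'s, so the dynamics must be built more carefully (e.g.\ landing on suitable combinations of basis directions). Carrying out this construction and verifying both halves is precisely the technical content of the proof in \cite{PJ:19}; as it stands, your converse direction is a plan rather than a proof.
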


As it has been discussed in \cite{PJ:19}, Theorem~\ref{Thm:Simulation} states that the general order relation (\ref{Def:G1_smaller_G2}) in Definition~\ref{Def:order_relation_between_graphs} is associated to a combinatorial property, notably the simulation. However, when it is not possible to establish a simulation relation, i.e. when there exist at least one template $\cV$ and one family $\cF$ such that the inequality (\ref{Def:G1_smaller_G2_V_F_fixed}) is not satisfied, it might still be possible to compare graphs with the relations~(\ref{Def:G1_smaller_G2_V_F_fixed}) and (\ref{Def:G1_smaller_G2_V_fixed}). In practice, this can result in wiser choices of template for the stability analysis in the sense of Definition~\ref{Def:order_relation_between_graphs}.

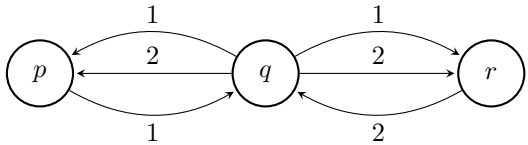
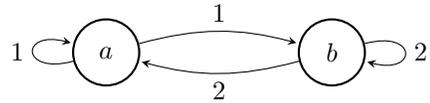
\begin{figure}[b!]
\begin{subfigure}{0.55\linewidth}
  \color{black}
  \centering
  \begin{tikzpicture}%
  [>=stealth,
  shorten >=1pt,
  node distance=1cm,
  on grid,
  auto,
  every state/.style={draw=black, fill=white,thick}
  ]
  \node[state] (node1)                 {$p$};
  \node[state] (node2) [right=of node1, xshift=2cm] {$q$};
  \node[state] (node3) [right =of node2, xshift=2cm]{$r$};
  \path[->]
  (node1) edge[bend right=30]     node[below]                      {1} (node2)
  (node2) edge[bend right=30]     node[above]                      {1} (node1)
  (node2) edge[bend right=0]     node[above]                      {2} (node1)
    (node2) edge[bend left=30]     node[above]                      {1} (node3)
    (node2) edge[bend right=0]     node[above]                      {2} (node3)
    (node3) edge[bend left=30]     node[below]                      {2} (node2)
;
  \end{tikzpicture}
  \caption{The graph $\cG_1 = (S_1,E_1)$ in Example~\ref{Ex:LimitsSimulation}}
  \label{Fig:ExLimitsSimulationG1}
  \end{subfigure}
  \begin{subfigure}{0.44\linewidth}
  \color{black}
  \centering
  \begin{tikzpicture}%
  [>=stealth,
  shorten >=1pt,
  node distance=1cm,
  on grid,
  auto,
  every state/.style={draw=black, fill=white,thick}
  ]
  \node[state] (node1)         {$a$};
  \node[state] (node2) [right=of node1, xshift=2cm] {$b$};
  \path[->]
  (node1) edge[loop left=180]     node                      {1} (node1)
  (node1) edge[bend left=15]     node                      {1} (node2)
  (node2) edge[bend left=15]     node                      {2} (node1)
  (node2) edge[loop right=0]     node                      {2} (node2)
;
  \end{tikzpicture}
  \caption{The graph $\cG_2 = (S_2,E_2)$ in Example~\ref{Ex:LimitsSimulation}}
\label{Fig:ExLimitsSimulationG2}
  \end{subfigure}
  \caption{Graphs of Example~\ref{Ex:LimitsSimulation}}
\end{figure}

\begin{exmp}[] \label{Ex:LimitsSimulation}
Consider the graphs $\cG_1=(S_1,E_1)$ and $\cG_2=(S_2,E_2)$ in Figures~\ref{Fig:ExLimitsSimulationG1} and \ref{Fig:ExLimitsSimulationG2} respectively. One can easily verify that $\cG_1$ does not simulate $\cG_2$. Indeed, we cannot define a relation $R:S_2 \to S_1$ with $R(a) \in S_1$ such that the edge $(R(a),R(a),1) \in E_1$ since $\cG_1$ does not have any loop. By Theorem~\ref{Thm:Simulation}, it means that $\cG_1 \nleq \cG_2$ in the sense of Definition~\ref{Def:order_relation_between_graphs}. However, one can easily prove that for any template $\cV$ closed under addition (as we will formally define in Definition~\ref{Def:ClosurePropertiesTemplate}), the inequality 
\[ \cG_1~\leq_{\cV}~\cG_2\]
holds. Indeed, let $\{V_p,V_q,V_r\} \subseteq \cV$ be admissible for $\cG_1$ and a given switching system $F$. Define the Lyapunov functions $W_a := V_p + V_q$ and $W_b := V_q + V_r$. One can easily prove that the set $\{W_a,W_b\} \subseteq \cV$ is admissible for $\cG_2$ and $F$. For example, the Lyapunov inequality 
\[ \forall x \in \R^n:~W_b(f_1(x)):=V_q(f_1(x))+V_r(f_1(x))~\leq~V_p(x)+V_q(x):=W_a(x),\]
encoded by the edge $(a,b,1) \in E_2$, holds because the Lyapunov inequalities encoded by the edges $(p,q,1)$ and $(q,r,1) \in E_1$ are satisfied by the functions $\{V_p,V_q,V_r \}$ by assumption. It implies in particular that the inequality holds for the quadratic Lyapunov functions.
\hfill $\triangle$
\end{exmp}

In this work, we want to investigate these situations and we focus our study on the two other inequalities (\ref{Def:G1_smaller_G2_V_F_fixed})  and (\ref{Def:G1_smaller_G2_V_fixed}). We introduce new combinatorial tools to understand the relation between the closure properties of the Lyapunov functions and the analytical properties of a class of systems, and the conservatism of a path-complete policy with respect to the others. 

\begin{assum}\label{Assum:SmallestGraph}
The path-complete graphs considered herein have one strongly connected component and are such that if we remove any edge, the graph is not path-complete. \end{assum}

This in particular implies that all the nodes admit at least one incoming edge and one outgoing edge. This assumption is not restrictive since our aim is to compare stability conditions: we suppose that the inequalities of the form~(\ref{eq:LyapunovInequalities}) encoded in the graphs are sufficient conditions for stability (path-completeness) without having redundant/unnecessary inequalities.

\section{Lifts}\label{sec:Lifts}
We develop in this section several expansions of graphs, called \emph{lifts}. The goal of a (valid) lift is to generate a better graph, in the sense of Definition~\ref{Def:order_relation_between_graphs}.

\begin{defn}[Lift]
Given $M \in N$, we denote with $Graphs_M$ the set of directed and labeled graphs on $\langle M \rangle$. A function $L : Graphs_M \to Graphs_M$ is a \emph{lift} if for any path-complete graph $\cG$, $L(\cG)$ is path-complete.
\end{defn}

Some examples of lifts have already been introduced  \cite{PEDJ:16} in the path-complete Lyapunov framework with the aim of improving the accuracy of the stability criteria but without exploiting the particular properties of the considered candidate Lyapunov functions template. In our case, instead, we want to use them as tools to provide a further insight about the order relations in Definition~\ref{Def:order_relation_between_graphs}, and in particular Definitions~\ref{Def:order_relation_between_graphs}(a) and~\ref{Def:order_relation_between_graphs}(b). Thus we have the following definitions.

\begin{defn}[Valid lift]\label{Def:ValidityLifts}
We say that a lift $L: Graphs_M \to Graphs_M$ is:
\begin{enumerate}[label=(\alph*)]
\item \emph{valid with respect to a template $\cV$ and a family $\cF$} if for any path-complete graph $\cG$,
\[\cG~\leq_{\cV,\cF}~L(\cG).\]
\item \emph{valid with respect to a template $\cV$} if for any path-complete graph $\cG$,
\[\cG~\leq_{\cV}~L(\cG).\]
\item \emph{valid} if for any path-complete graph $\cG$,
\[ \cG~\leq~L(\cG). \]
\end{enumerate}
\end{defn}

\noindent To be consistent with Theorem~\ref{Thm:Simulation} that characterizes the general inequality~(\ref{Def:G1_smaller_G2}), a lift is valid if and only if there exists a simulation relation between $\cG$ and $L(\cG)$. This is the case for both $T$-product and $M$-path-dependent lifts defined in \cite{PEDJ:16}, for instance. In this work, we are particularly interested in the order relations~(\ref{Def:G1_smaller_G2_V_F_fixed}) and (\ref{Def:G1_smaller_G2_V_fixed}) and therefore, we focus our study on lifts that are valid with respect to a template (and a family) as in Definition~\ref{Def:ValidityLifts}(a) and (b). Indeed, quadratic functions are \emph{closed under addition}. By this, we mean that the sum of two quadratic functions of a fixed dimension can also be expressed as a quadratic function. It turns out that this property is key for the relation (\ref{Def:G1_smaller_G2_V_fixed}) (see Theorem~\ref{Thm:GeneralThmLifts} below). More generally, we will show that such a closure property allows us to define lifts that are valid in a specific setting, even though they are not valid in general (i.e. in the sense of Definition~\ref{Def:order_relation_between_graphs}).

\subsection{Template-dependent lifts}

In this section, we study the consequences of closure properties of the template on the path-complete stability certificates, and we focus on lifts whose nodes of the lifted graph are associated to subsets of the initial set of nodes. In what follows, we introduce three \emph{template-dependent lifts}, that are lifts whose validity depends on the template properties. 


\begin{defn}[Closure properties of a template] \label{Def:ClosurePropertiesTemplate}
Given a template $\cV = \cup_{n \in \N} \cV_n $ of candidate Lyapunov functions and a family of binary operations $\{\star_n:\cC^0_+(\R^n,\R)\times\cC^0_+(\R^n,\R)\to \cC^0_+(\R^n,\R)\}_{n \in \N}$.
\begin{enumerate}
    \item[(a)] For a fixed dimension $n \in \N$, we say that the set of functions $\cV_n$ is \emph{closed under the binary operation $\star_n$} if for all $f_1, f_2 \in \cV_n$, $f_1 \star_n f_2 \in \cV_n$.
    \item[(b)] We say that the \emph{template $\cV$ is closed under the family of binary operations $\{\star_n\}_{n \in \N}$ is for all $n \in \N$, the set $\cV_n$ is closed under $\star_n$. }
\end{enumerate}
\end{defn}

As first and remarkable example of binary operation, we consider the law of \emph{addition} under which many usual templates are closed, as, for example, quadratic functions, convex functions or sum-of-squares polynomials. Therefore, in what follows, we define the \emph{$T$-sum lift}, which explores the existing relations between \emph{sums} of $T$ functions/nodes of the initial graph. To this aim, given a set $S$ and $T\in N$, we denote with $Multi^T(S)$ the set of multi-sets of cardinality $T$ with elements in $S$ , where a multi-set is defined as a set with possible repetitions.

\begin{defn}[$T$-sum lift]
Given $T\in \N$ and a graph $\cG = (S,E)$ on the alphabet $\langle M \rangle$, the \emph{$T$-sum lift}, denoted by $\cG^{\oplus T} = (S^{\oplus T},E^{\oplus T})$, is defined as follows :

\begin{enumerate}
\item[(1)] The set of nodes $S^{\oplus T}$ is defined by 
\[ S^{\oplus T}~:=~Multi^T(S) .\]

\item[(2)] For each multi-set of edges of $E$ of the form $\left \{(a_1,b_1,i), \dots,(a_T,b_T,i)\right \}$ with $i \in \langle M \rangle$ such that $\{a_1,\dots, a_T\}$ and $\{b_1, \dots,  b_T\} \in S^{\oplus T}$, the edge $(\{a_1, \dots, a_T\}, \{b_1, \dots, b_T\},i) \in E^{\oplus T}$.
\end{enumerate}
\label{Def:SumLift}
\end{defn}

Even though the binary operation of the sum is natural and many templates of Lyapunov functions are closed under sum, it turns out that our approach generalizes to less straightforward binary operations. For instance, the templates of piecewise $\cC^1$ functions \cite{dellarossa2020piecewise} and polyhedral functions \cite{AJ2020}, which are usually used for stability analysis, are closed under pointwise maximum of finitely many functions \cite[Proposition 3]{dellarossa2020piecewise}. These results motivate the introduction of both \emph{min} and \emph{max lifts}.

\begin{defn}[Max/Min-lift]\label{defn:MinLift} Given a graph $ \cG = (S,E)$ on the alphabet $\langle M \rangle$. 

\begin{enumerate}
\item[(a)] The \emph{max lift}, denoted by $\cG_{\max} = (S_{\max} , E_{\max})$, is defined as follows:
\begin{enumerate}
\item[(1)] The set of nodes $S_{\max}$ is defined by 
\[S_{\max}~:=~\{ S' \subseteq S \mid S' \neq \emptyset \}.\]
\item[(2)] An edge $(A,B,i)\in E_{max}$ with $A, B \in S_{\max}$ and $i \in \langle M \rangle$ if and only if for all $b \in B$, there exists at least one $a \in A$ such that $(a,b,i) \in E$.
\end{enumerate}
\item[(b)] The \emph{min lift}, denoted by $\cG_{\min} = (S_{\min} , E_{\min})$, is defined as follows:
\begin{enumerate}
\item[(1)] The set of nodes $S_{\min}$ is defined by 
\[S_{\min}~:=~\{ S' \subseteq S \mid S' \neq \emptyset \}.\]
\item[(2)] An edge $(A,B,i)\in E_{min}$ with $A, B \in S_{\min}$ and $i \in \langle M \rangle$ if and only if for all $a \in A$, there exists at least one $b \in B$ such that $(a,b,i) \in E$.
\end{enumerate}
\end{enumerate}
\end{defn}

\begin{rem} \label{rem:LiftsIsomorphic}
Note that we omit to prove that the lifts are well-defined because the path-completeness follows directly. Indeed, given a path-complete graph $\cG$, we can observe that the sum-,min-,and max-lifted graphs admit a path-complete and strongly connected component isomorphic to the initial graph. See \cite{DDLJ21} for more details.
\hfill $\triangle$
\end{rem}

We state now the main theorem of this paper which discusses the validity of the lifts introduced in Definitions~\ref{Def:SumLift} and \ref{defn:MinLift}.

\begin{thm} \label{Thm:GeneralThmLifts}
Consider $T \in \N$ and a family of binary operations $\{\star_n\}_{n \in \N}$ such that $\forall n \in \N$, $\star_n$ corresponds to the addition (resp. pointwise maximum, pointwise minimum). The $T$-sum (resp. max, min) lift is valid with respect to any template closed under $\{\star_n\}_{n \in \N}$.

\end{thm}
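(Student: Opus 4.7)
The plan is to handle the three lifts in a uniform way: given an admissible collection $V=\{V_s\}_{s\in S}\subseteq \cV$ for $\cG$ and a system $F$, build an admissible collection $W=\{W_A\}_{A\in S^\star}$ for the lifted graph $L(\cG)$, where the index set $S^\star$ is the appropriate family of multi-sets/subsets of $S$, by applying the binary operation node-wise. Concretely, for the $T$-sum lift set $W_A:=V_{a_1}\star_n \cdots \star_n V_{a_T}=\sum_{k=1}^T V_{a_k}$ for $A=\{a_1,\dots,a_T\}$; for the max lift set $W_A:=\max_{a\in A} V_a$; and for the min lift set $W_A:=\min_{a\in A} V_a$. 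The closure hypothesis on $\cV$ is exactly what guarantees $W_A\in \cV_n$, so each $W_A$ belongs to the template.

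Next I would check that each $W_A$ lies in $\cC^0_+(\R^n,\R)$. Continuity is immediate (finite sums, max, min of continuous functions are continuous). Positive definiteness follows from the fact that sum, max and min of positive definite functions are positive definite. Radial unboundedness is preserved by sum and max trivially, and by min because if every $V_{a}$ tends to infinity as $\|x\|\to\infty$ then so does the pointwise minimum.

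The core step is verifying the Lyapunov inequality~(\ref{eq:LyapunovInequalities}) on $L(\cG)$. For the $T$-sum lift, any edge $(A,B,i)\in E^{\oplus T}$ arises by definition from a multi-set of edges $(a_k,b_k,i)\in E$ for $k=1,\dots,T$; summing the $T$ inequalities $V_{b_k}(f_i(x))\leq V_{a_k}(x)$ yields $W_B(f_i(x))\leq W_A(x)$. For the max lift, an edge $(A,B,i)\in E_{\max}$ means that for each $b\in B$ there is some $a=a(b)\in A$ with $(a,b,i)\in E$; hence $V_b(f_i(x))\leq V_{a(b)}(x)\leq \max_{a'\in A}V_{a'}(x)=W_A(x)$, and taking the max over $b\in B$ gives $W_B(f_i(x))\leq W_A(x)$. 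For the min lift, an edge $(A,B,i)\in E_{\min}$ means that for each $a\in A$ there is some $b=b(a)\in B$ with $(a,b,i)\in E$; thus $\min_{b'\in B}V_{b'}(f_i(x))\leq V_{b(a)}(f_i(x))\leq V_a(x)$, and taking the min over $a\in A$ gives $W_B(f_i(x))\leq W_A(x)$.

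I do not expect a genuine technical obstacle; the construction is dictated by which binary operation $\star_n$ is at hand, and the matching between the combinatorial rule defining the edges of the lifted graph and the algebraic rule for combining the inequalities is essentially tautological. The only subtle points are bookkeeping items: the fact that the closure axiom in Definition~\ref{Def:ClosurePropertiesTemplate} is stated for binary operations (so for the $T$-sum lift one needs a short induction to ensure $T$-fold sums stay in $\cV_n$), and the verification that the additional regularity properties packaged in $\cC^0_+$ survive the three operations. Both are routine.
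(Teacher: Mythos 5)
Your proposal is correct and follows essentially the same route as the paper: it defines the lifted Lyapunov functions node-wise via the binary operation (sum, pointwise max, pointwise min) and verifies the Lyapunov inequalities by matching the combinatorial edge rule of each lift with the corresponding algebraic combination of the original inequalities, exactly as in the paper's three proofs. Your additional remarks on the $T$-fold closure induction and on preservation of the $\cC^0_+$ properties are correct housekeeping that the paper leaves implicit.
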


The proof for each lift will be developed in the following subsections.

\subsubsection{The T-sum lift}

In this section, we prove  Theorem~\ref{Thm:GeneralThmLifts} in the case of the $T$-sum lift introduced in Definition~\ref{Def:SumLift}.

\begin{proof}[Proof of Theorem~\ref{Thm:GeneralThmLifts}, $T$-sum lift]
Consider a path-complete graph $\cG=(S,E)$ on the alphabet $\langle M \rangle$, a template $\cV$ of candidate Lyapunov functions closed under addition and any family $\cF := \{f_i\}_{i\in \M}$. Suppose that there exists a set of functions $\{V_s \mid s \in S\} \subset \mathcal{V}$ admissible for $\cG$ and $\cF$, and for any $\overline a= \{a_1,\dots,a_T\}\in S^{\oplus T}$ define
\begin{equation} \label{def:LyapFunctionsSumLift}
    W_{\overline a}~:=~V_{a_1} + \dots + V_{a_T} \in \mathcal{V}.
\end{equation}
The Lyapunov inequalities \eqref{eq:LyapunovInequalities} of $\cG^{\oplus T}$ are satisfied because, for every edge $(\overline a, \overline b, i) \in E^{\oplus T}$, we have
\[
\begin{aligned}
W_{\overline b} \left(f_i(x)\right)  &=  \left( V_{b_1}(f_i(x)) +\dots + V_{b_T}(f_i(x)) \right),\\[0.1cm]
& \leq  \left( V_{a_1}(x) + \dots + V_{a_T}(x) \right) \\[0.1cm]
&=  W_{\overline{a}}(x),
\end{aligned} \]

\noindent for all $x \in \R^n$ since $(a_1,b_1,i)$, $\dots$,$(a_T,b_T,i)\in E$ by Definition~\ref{Def:SumLift} (possibly after a re-ordering of $\overline a$ and $\overline b$).
\end{proof}

\begin{figure}[b!]
\begin{subfigure}{0.49\linewidth}
  \color{black}
  \centering
  \begin{tikzpicture}%
  [>=stealth,
  shorten >=1pt,
  node distance=1cm,
  on grid,
  auto,
  every state/.style={draw=black, fill=white,thick}
  ]
  \node[state] (left)                  {$a$};
  \node[state] (right) [right=of left, xshift=3cm] {$b$};
  \path[->]
  (left) edge[loop left=60]     node                      {2} (left)
  (left) edge[bend left=15]     node                      {1} (right)
  (right) edge[bend left=15]     node                      {1} (left)
  (right) edge[loop right=300]     node                      {2} (right)
;
  \end{tikzpicture}
  \caption{The graph $\cG_3 = (S_3,E_3)$ in Example~\ref{ex:Sum}}
  \label{Fig:InitialSumLift}
  \end{subfigure}
  \begin{subfigure}{0.49\linewidth}
  \color{black}
  \centering
  \begin{tikzpicture}%
  [>=stealth,
  shorten >=1pt,
  node distance=1cm,
  on grid,
  auto,
  every state/.style={draw=black, fill=white,thick}
  ]
  \node[state] (left)  [yshift=1.5cm]                {$\{a,a\}$};
  \node[state] (right) [right=of left, xshift=3cm] {$\{b,b\}$};
  \node[state] (below) [below right=of left, xshift=1.4cm, yshift=-1cm]{$\{a,b\}$};
  \path[->]
  (left) edge[loop left=60]     node                      {2} (left)
  (left) edge[bend left=15]     node                      {1} (right)
  (right) edge[bend left=15]     node                      {1} (left)
  (right) edge[loop right=300]     node                      {2} (right)
  (below) edge[loop right=300]     node                      {2} (below)
  (below) edge[loop left=60]     node                      {1} (below)
;
  \end{tikzpicture}
  \caption{The 2-sum-lift $(\cG_3)^{\oplus 2}$}
\label{Fig:SumLiftedGraph}
  \end{subfigure}
  \caption{Example of a $2$-sum-lifted graph}
\end{figure}
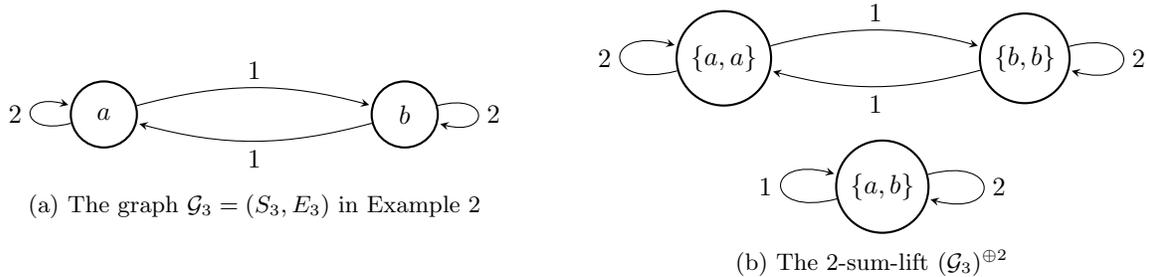

\begin{exmp}\label{ex:Sum}
Consider the path-complete graph $\cG_3 = (S_3,E_3)$ on the alphabet $\langle M \rangle := \{1,2\}$ in Figure~\ref{Fig:InitialSumLift}, and apply the $2$-sum lift in Definition~\ref{Def:SumLift} to $\cG_3$. The outcome is provided in  Figure~\ref{Fig:SumLiftedGraph}. As expected, the lifted graph $(\cG_3)^{\oplus 2}$ admits three nodes, one for each multi-set of cardinality $2$ of the initial set of nodes $S_3 = \{a,b\}$, i.e. $Multi^2(S_3) = \left\{ \{a,a\}, \{a,b\}, \{b,b\} \right\}$. By Theorem~\ref{Thm:GeneralThmLifts}, we know that for any template $\cV$ closed under sum, the inequality 
\[ \cG_3~\leq_{\cV}~(\cG_3)^{\oplus 2}\]
holds. By Proposition~\ref{Prop:InegalityHoldsForSubgraph}, this inequality is also verified for the two path-complete and strongly connected components of $(\cG_3)^{\oplus 2}$. As reported by Remark~\ref{rem:LiftsIsomorphic}, one of the components induced by the nodes $\{a,a\}$ and $\{b,b\}$ is isomorphic to the graph $\cG_3$ itself. The second one induced by the node $\{a,b\}$ is isomorphic to the common Lyapunov function graph $\cG_0$ since the node associated to $\{a,b\}$ admits one loop for each mode. So, Theorem~\ref{Thm:GeneralThmLifts} and Proposition~\ref{Prop:InegalityHoldsForSubgraph} imply together that 
\[ \cG_3~\leq_{\cV}~\cG_0\]
for any template $\cV$ closed under sum. Moreover, by Proposition~\ref{Prop:InequalityCLFGraph}, we know that the reverse inequality holds for any template and any switching system. In particular, 
\[ \cG_0~\leq_{\cV}~\cG_3\]
for any template $\cV$ closed under sum. We have thus proved that the graphs $\cG_3$ and $\cG_0$ are equivalent in the sense of the order relation~(\ref{Def:G1_smaller_G2_V_fixed}) for any template closed under sum. In practice, it means that given such a template $\cV$ and a switching system $F$, either both graphs $\cG_3$ and $\cG_0$ admit a solution admissible for $\cV$ and $F$, or none of them. That is, the inequalities encoded in $\cG_3$ are as conservative as the ones encoded in $\cG_0$.\hfill $\triangle$
\end{exmp}

\begin{rem}Note that the $T$-sum lift introduced in Definition~\ref{Def:SumLift} is a  generalization of the construction presented in \cite{PhiAth19}. Indeed, the comparison (\ref{Def:G1_smaller_G2_V_F_fixed}) of path-complete graphs in Definition~\ref{Def:order_relation_between_graphs} is tackled for the particular template of quadratic functions (closed under sum) and the linear switching systems. A sufficient condition to the implication (\ref{Eq:VimpliesW}) is provided and consists in checking whether the solution ($W$) of the second graph can be defined as a conic combination of the solution ($V$) of the first graph regardless of the switching system (see \cite[Definition IV.2.]{PhiAth19} for the formal definition).
\hfill $\triangle$
\end{rem}

\subsubsection{The min and max lifts}

We provide here the proof of Theorem~\ref{Thm:GeneralThmLifts} in the case of max and min lifts introduced in Definition~\ref{defn:MinLift}.

\begin{proof}[Proof of Theorem~\ref{Thm:GeneralThmLifts}, max lift] 
Consider a template $\cV$ closed under pointwise maximum and any family of vector fields $\{f_i\}_{i\in \M}$. Suppose that there exists a PCLF for the initial graph $\cG$ of the form $\{V_s \mid s \in S\} \subset \mathcal{V}$. Given any $A\in S_{\max}$ the corresponding Lyapunov function $W_A\in \cV$ is defined by
\begin{equation}
    \forall x \in \R^n:~W_{A}(x)~:=~\underset{a\in A}{\max} \, V_{a}(x).
    \label{def:LyapFunctionsMaxLift}
\end{equation}
Given $(A,B,i)\in E_{\max}$,  we have 
\[W_B(f_i(x))~=~\max_{b\in B} V_{b}(f_i(x))~\leq~\max_{a\in A}V_a(x)~=~W_A(x), \]
for any $x\in \R^n$, since, by Definition~\ref{defn:MinLift}, for all $b\in B$ there exists at least a $a\in A$ such that $V_b(f_i(x))\leq V_a(x)$, concluding the proof.
\end{proof}

\begin{figure}[b!]
  \color{black}
  \centering
  \begin{tikzpicture}%
  [>=stealth,
  shorten >=1pt,
  node distance=1cm,
  on grid,
  auto,
  every state/.style={draw=black, fill=white, thick}
  ]
  \node[state] (left)  [yshift=2.5cm]                {$\{a\}$};
  \node[state] (right) [right=of left, xshift=3cm] {$\{b\}$};
  \node[state] (below) [below right=of left, xshift=1.4cm, yshift=-1.5cm]{$\{a,b\}$};
  \path[->]
  (left) edge[loop left=60]     node                      {2} (left)
  (left) edge[bend left=15]     node                      {1} (right)
  (right) edge[bend left=15]     node                      {1} (left)
  (right) edge[loop right=300]     node                      {2} (right)
  (below) edge[bend left=10]     node[below]                      {1} (left)
  (below) edge[bend left=35]     node[below]                      {2} (left)
  (below) edge[bend right=10]     node[below]                      {1} (right)
  (below) edge[bend right=35]     node[below]                      {2} (right)
  (below) edge [out=330,in=300,looseness=8] node[below] {1} (below)
  (below) edge [out=240,in=210,looseness=8] node[below] {2} (below);
;
\end{tikzpicture}
\caption{The max-lift $(\cG_3)_{\max}$}
\label{Fig:ADHS_max_lift}
\end{figure}

\begin{exmp}\label{ex:MaxLIft}
Consider the path-complete graph $\cG_3 = (S_3,E_3)$ on two modes ($M = 2$) in Figure~\ref{Fig:InitialSumLift}. The graph $(\cG_3)_{\max}$ in Figure~\ref{Fig:ADHS_max_lift} results from the application of the max lift introduced in Definition~\ref{defn:MinLift} to the graph $\cG_3$. It consists in two path-complete and strongly connected components. As stated by Remark~\ref{rem:LiftsIsomorphic}, there is a subgraph isomorphic to $\cG_3$ itself, and the second one is isomorphic to the common Lyapunov function graph $\cG_0$. By Theorem~\ref{Thm:GeneralThmLifts}, the inequality 
\[ \cG_3~\leq_{\cV}~(\cG_3)_{\max} \]
holds for any template $\cV$ closed under pointwise maximum. Moreover, by Proposition~\ref{Prop:InegalityHoldsForSubgraph}, the same inequality holds for the component $\cG_0$ of $(\cG_3)_{\max}$. Since the reverse inequality holds trivially by Proposition~\ref{Prop:InequalityCLFGraph}, it means that $\cG_3$ provides stability criteria as conservative as those of $\cG_0$ for this kind of templates. \hfill $\triangle$
\end{exmp}

Actually the construction of the min lift is dual (in a sense that we will clarify) to the construction of the max lift, so the proof of Theorem~\ref{Thm:GeneralThmLifts} for the min lift follows the same structure as the proof for the max lift.

\begin{proof}[Proof of Theorem~\ref{Thm:GeneralThmLifts}, min lift]

The proof is the same as the proof for max, but given any $A \in S_{\min}$ the corresponding Lyapunov function $W_A \in \cV$ is defined by
$W_{A}(x):=\underset{a\in A}{\min} \, V_{a}(x)$, $ \forall x \in \R^n$.
\end{proof}

\begin{figure}[b!]
\begin{subfigure}{0.49\linewidth}
  \color{black}
  \centering
  \begin{tikzpicture}%
  [>=stealth,
  shorten >=1pt,
  node distance=1cm,
  on grid,
  auto,
  every state/.style={draw=black, fill=white, thick}
  ]
  \node[state] (left)                  {$a$};
  \node[state] (right) [right=of left, xshift=3cm] {$b$};
  \path[->]
  (left) edge[loop left=60]     node                      {1} (left)
  (left) edge[bend left=15]     node                      {2} (right)
  (right) edge[bend left=15]     node                      {1} (left)
  (right) edge[loop right=300]     node                      {2} (right)
;
  \end{tikzpicture}
  \caption{The graph $\cG_4 = (S_4,E_4)$ in Example~\ref{ex:MinLIft}}
  \label{Fig:ADHS_initial_min_lift}
  \end{subfigure}
  \begin{subfigure}{0.49\linewidth}
  \color{black}
  \centering
  \begin{tikzpicture}%
  [>=stealth,
  shorten >=1pt,
  node distance=1cm,
  on grid,
  auto,
  every state/.style={draw=black, fill=white, thick}
  ]
  \node[state] (left)  [yshift=2.5cm]                {$\{a\}$};
  \node[state] (right) [right=of left, xshift=3cm] {$\{b\}$};
  \node[state] (below) [below right=of left, xshift=1.4cm, yshift=-1.5cm]{$\{a,b\}$};
  \path[->]
  (left) edge[loop left=60]     node                      {1} (left)
  (left) edge[bend left=15]     node                      {2} (right)
  (right) edge[bend left=15]     node                      {1} (left)
  (right) edge[loop right=300]     node                      {2} (right)
  (left) edge[bend right=30]     node[below]                      {1} (below)
  (left) edge[bend right=55]     node[below]                      {2} (below)
  (right) edge[bend left=30]     node[below]                      {1} (below)
  (right) edge[bend left=55]     node[below]                      {2} (below)
  (below) edge[bend right=0]     node[above]                      {1} (left)
  (below) edge[bend left=0]     node                      {2} (right)
  (below) edge [out=330,in=300,looseness=8] node[below] {1} (below)
  (below) edge [out=240,in=210,looseness=8] node[below] {2} (below);
;
  \end{tikzpicture}
  \caption{The min-lift $(\cG_4)_{\min}$}
  \label{Fig:ADHS_min_lift}
  \end{subfigure}
  \caption{Example of a min-lifted graph}
\end{figure}
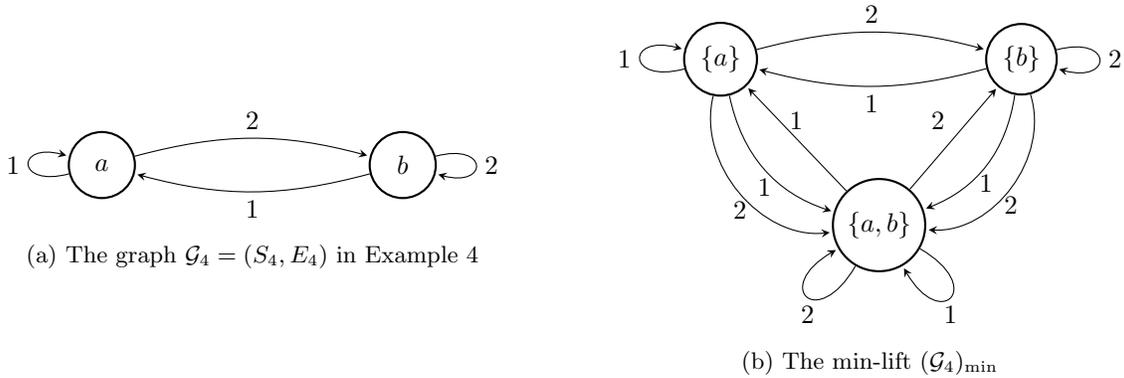

\begin{exmp}\label{ex:MinLIft}
Consider the path-complete graph $\cG_4 = (S_4,E_4)$ with $M = 2$ in Figure~\ref{Fig:ADHS_initial_min_lift}. The min lifted graph $(\cG_4)_{\min}$ illustrated in Figure~\ref{Fig:ADHS_min_lift} has 3 nodes, one for each subset of $(S_4)_{\min}$ and we can easily identify two particular strongly connected components: first, a component isomorphic to $\cG_4$ as predicted by Remark~\ref{rem:LiftsIsomorphic}, and a component isomorphic to the common Lyapunov function graph $\cG_0$. By Theorem~\ref{Thm:GeneralThmLifts} and Propositions~\ref{Prop:InegalityHoldsForSubgraph} and \ref{Prop:InequalityCLFGraph}, it means that $\cG_4$ is equivalent to $\cG_0$ for any template closed under pointwise minimum. \hfill $\triangle$
\end{exmp}

Proposition~\ref{Prop:InequalityCLFGraph} tells us that the graph associated to the common Lyapunov function $\cG_0$ is worse than any path-complete graph in the sense of Definition~\ref{Def:order_relation_between_graphs}. But one can easily see that for some graphs and under some assumptions (\cite[Theorem III.8]{PhiAth19}), the reverse inequality holds.
\begin{prop}\label{prop:CompletenessandMAxMin}
Consider a path-complete graph $\cG$ and the common Lyapunov function graph $\cG_0$.
\begin{enumerate}
    \item[(a)] If $\cG$ is \emph{complete}, $\cG_0$ is a path-complete component of $\cG_{\min}$ and thus
    \[\cG~\leq_{\cV}~\cG_0 \]
    for any template $\cV$ closed under pointwise min.
    \item[(b)] If $\cG$ is \emph{co-complete}, $\cG_0$ is a path-complete component of $\cG_{\max}$ and thus
    \[\cG~\leq_{\cV}~\cG_0 \]
    for any template $\cV$ closed under pointwise max.
    \item[(c)] The following inequalities
    \[ \cG ~\leq_{\cV}~(\cG_{\max})_{\min}~\leq_{\cV}~\cG_0\]
    and 
    \[ \cG ~\leq_{\cV}~(\cG_{\min})_{\max}~\leq_{\cV}~\cG_0 \]
    hold for any template $\cV$ closed under pointwise minimum and maximum.
\end{enumerate}
\end{prop}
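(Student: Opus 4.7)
The plan is to handle the three parts uniformly: in each case the strategy is to exhibit a specific node of the relevant lifted graph whose induced sub-component is isomorphic to $\cG_0$, and then combine the validity of the lifts (Theorem~\ref{Thm:GeneralThmLifts}) with the transfer-to-subgraph result (Proposition~\ref{Prop:InegalityHoldsForSubgraph}) to conclude.

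For (a), completeness of $\cG$ means that every $a \in S$ has an outgoing $i$-edge for every $i \in \M$; this is precisely the condition under which $(S, S, i) \in E_{\min}$ for every $i$, so the full set $S$, viewed as a node of $\cG_{\min}$, carries a self-loop for every label, and the singleton sub-graph on $S$ is isomorphic to $\cG_0$. Since Theorem~\ref{Thm:GeneralThmLifts} gives $\cG \leq_{\cV} \cG_{\min}$ for any $\cV$ closed under pointwise min, Proposition~\ref{Prop:InegalityHoldsForSubgraph} yields $\cG \leq_{\cV} \cG_0$. Part (b) will be obtained dually: co-completeness translates into $(S, S, i) \in E_{\max}$ for every $i$, and the same reasoning applied to $\cG_{\max}$ closes the argument.

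Part (c) is the substantive step because $\cG$ need not be complete or co-complete, so no single node of $\cG$ admits a self-loop under a single lift; the two-fold lift is really needed. For $(\cG_{\max})_{\min}$, my candidate node will be
\[ \mathcal{A}^{*} := \left\{ A \subseteq S,\ A \neq \emptyset : \mathrm{succ}_w(A) \neq \emptyset \text{ for every finite word } w \right\}, \]
where $\mathrm{succ}_w$ denotes iterated forward reachability along $w$. Path-completeness of $\cG$ ensures $S \in \mathcal{A}^{*}$, so $\mathcal{A}^{*}$ is a non-empty element of $(S_{\max})_{\min}$. Given $A \in \mathcal{A}^{*}$ and $i \in \M$, taking $A' := \mathrm{succ}_i(A)$ produces a non-empty successor satisfying $(A, A', i) \in E_{\max}$ directly from the definition of the max lift, and the identity $\mathrm{succ}_w(A') = \mathrm{succ}_{iw}(A)$ shows $A' \in \mathcal{A}^{*}$. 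Hence $(\mathcal{A}^{*}, \mathcal{A}^{*}, i) \in (E_{\max})_{\min}$ for every $i$, so the singleton sub-graph at $\mathcal{A}^{*}$ is isomorphic to $\cG_0$. A symmetric argument using
\[ \mathcal{A}^{**} := \left\{ A \subseteq S,\ A \neq \emptyset : \mathrm{pre}_w(A) \neq \emptyset \text{ for every finite word } w \right\} \]
and $A' := \mathrm{pre}_i(A)$ will exhibit $\cG_0$ as a component of $(\cG_{\min})_{\max}$.

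The full chain of inequalities will then be assembled as follows. The left inequality $\cG \leq_{\cV} (\cG_{\max})_{\min}$ follows from two consecutive applications of Theorem~\ref{Thm:GeneralThmLifts} (first the max lift, then the min lift) together with the transitivity of $\leq_{\cV}$, since $\cV$ is closed under both operations; the right inequality $(\cG_{\max})_{\min} \leq_{\cV} \cG_0$ comes from applying Proposition~\ref{Prop:InegalityHoldsForSubgraph} to the trivial relation $(\cG_{\max})_{\min} \leq_{\cV} (\cG_{\max})_{\min}$ and the $\cG_0$-component just identified, and likewise for $(\cG_{\min})_{\max}$. The only delicate point I anticipate is verifying that $\mathcal{A}^{*}$ and $\mathcal{A}^{**}$ are actually closed under the successor and predecessor operations respectively, because without that closure the self-loop argument collapses; this reduces to the routine identity $\mathrm{succ}_{iw} = \mathrm{succ}_w \circ \mathrm{succ}_i$ and its dual.
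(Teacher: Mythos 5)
Your proof is correct. The paper states this proposition without giving a proof (it is presented as a direct consequence of the preceding machinery), so there is no written argument to compare against; your write-up supplies exactly the reasoning the paper implicitly relies on: identify a node of the lifted graph carrying a self-loop for every label (as in Example~\ref{ex:Sum} and Example~\ref{ex:MaxLIft}), observe that the induced singleton subgraph is isomorphic to $\cG_0$, and combine Theorem~\ref{Thm:GeneralThmLifts} with Proposition~\ref{Prop:InegalityHoldsForSubgraph}. Parts (a) and (b) are immediate translations of the complete/co-complete conditions into the edge conditions of $E_{\min}$ and $E_{\max}$ at the node $S$, as you note.

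The one place where real care is needed is part (c), and you handle it properly. The naive choice of taking the whole of $S_{\max}$ as the candidate node of $(\cG_{\max})_{\min}$ fails, because for an arbitrary nonempty $A\subseteq S$ the successor set $\mathrm{succ}_i(A)$ may be empty, in which case $A$ has no outgoing $i$-edge in $\cG_{\max}$ at all; restricting to the collection $\mathcal{A}^{*}$ of subsets surviving every word is therefore not a cosmetic refinement but a necessary one. Your verification that $\mathcal{A}^{*}$ is nonempty (it contains $S$ by path-completeness), closed under $\mathrm{succ}_i$ (via $\mathrm{succ}_w\circ\mathrm{succ}_i=\mathrm{succ}_{iw}$), and that $\bigl(A,\mathrm{succ}_i(A),i\bigr)\in E_{\max}$ holds by definition of the max lift, is complete; the dual argument with $\mathrm{pre}$ for $(\cG_{\min})_{\max}$ is likewise sound. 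Equivalently, your construction amounts to showing that $\cG_{\max}$ always contains a complete path-complete component (the restriction to $\mathcal{A}^{*}$) and then invoking part (a), which may be a slightly more economical way to present it, but the content is the same.
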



\subsection{Lifts that depend on both the template and the dynamics properties}

In this section and in order to study the order relation~(\ref{Def:G1_smaller_G2_V_F_fixed}) in Definition~\ref{Def:order_relation_between_graphs}, we introduce another lift whose validity depends on both the template and the dynamics properties. This lift sheds new light, and provides a generalization of previous results in the literature, such as \cite[Proposition 4.2]{AJPR:14}, \cite[Example IV.11]{PhiAth19} and \cite[Example 3.9]{PJ:19}. For instance, the \emph{composition lift} introduced in Definition~\ref{def:ForwardCompoLift} was implicitly used  in the particular case of quadratic Lyapunov functions and linear switching systems, but it was not clear how it could be used in a general setting. Theorem~\ref{Thm:CompositionLifts} now answers the question.

\begin{defn}[Composition lift]\label{def:ForwardCompoLift}
Given a graph $G = (S,E)$ on the alphabet $\langle M \rangle $. The composition lift, denoted by $G_{\text{co}} = (S_{\text{co}},E_{\text{co}})$, is defined as follows :
\begin{enumerate}
\item[(1)] The set of nodes $S_{\text{co}}$ is defined by 
\[ S_{\text{co}} ~ := ~ \left\{ s \circ i \mid s \in S, \, i \in \M \right\}. \]
\item[(2)] For each edge $(a,b,i) \in E$, and each mode $j \in \langle M \rangle$, the edge $(a \circ j, b \circ i,j) \in E_{\text{co}}$.
\end{enumerate}
\end{defn}

\begin{prop}\label{Prop:ForwardBackwardLiftsP-C}
The composition lift preserves the path-completeness.
\end{prop}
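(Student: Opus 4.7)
The plan is to fix an arbitrary word $\sigma = (j_1, \dots, j_K) \in \M^K$ and explicitly exhibit a path in $G_{\text{co}}$ whose edge-label sequence is exactly $\sigma$. First I would analyze the rigid structure of edges of $E_{\text{co}}$: according to Definition~\ref{def:ForwardCompoLift}, every edge has the form $(a \circ j,\, b \circ i,\, j)$ with $(a,b,i) \in E$, so the second component of the source is forced to equal the label of the edge. Consequently, when concatenating two edges, the second component of the target of the first must match the label of the successor, so the second component of each intermediate vertex along a lifted path is entirely determined by $\sigma$.

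From this observation, the construction reduces to a straightforward application of the path-completeness of $G$: a candidate path in $G_{\text{co}}$ reading $(j_1, \dots, j_K)$ corresponds to a sequence of underlying edges in $E$ reading $(j_2, j_3, \dots, j_K, i_K)$ for some free $i_K \in \M$ (only the last edge is unconstrained by a successor). I would therefore pick any $i_K \in \M$, invoke path-completeness of $G$ on the word $(j_2, \dots, j_K, i_K) \in \M^K$ to obtain a path $(a_{k-1}, a_k, \ell_k)_{k=1}^{K}$ in $E$ with $\ell_k = j_{k+1}$ for $k < K$ and $\ell_K = i_K$, and then lift it entrywise to the candidate path $(a_{k-1} \circ j_k,\, a_k \circ \ell_k,\, j_k)_{k=1}^{K}$ in $E_{\text{co}}$, using rule~(2) of Definition~\ref{def:ForwardCompoLift}: each original edge $(a_{k-1}, a_k, \ell_k)\in E$ paired with the mode $j_k$ yields the displayed edge of $E_{\text{co}}$.

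To finish, I would verify concatenability: the target of the $k$-th lifted edge is $a_k \circ \ell_k = a_k \circ j_{k+1}$, which coincides with the source $a_k \circ j_{k+1}$ of the $(k{+}1)$-st edge, so the concatenation is well-defined. The degenerate case $K=1$ is immediate since path-completeness of $G$ guarantees some edge $(a,b,i)\in E$, which lifts to $(a\circ j_1, b\circ i, j_1)\in E_{\text{co}}$. I do not foresee a real obstacle; the only subtlety worth highlighting is the one-step shift between the labels of the $G_{\text{co}}$-path (namely $j_1, \dots, j_K$) and the labels of the underlying $G$-path (namely $j_2, \dots, j_K, i_K$), together with the freedom in choosing the final mode $i_K$, which is exactly the degree of freedom needed to invoke path-completeness at length $K$.
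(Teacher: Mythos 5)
Your proof is correct and follows essentially the same route as the paper's: both exploit the fact that an edge of $E_{\text{co}}$ forces the second component of its source to equal its label, so the underlying path in $E$ reads the input word shifted by one step. The only difference is bookkeeping: the paper lifts a $G$-path reading $\sigma$ itself and then appends one extra lifted edge using Assumption~\ref{Assum:SmallestGraph} (existence of an outgoing edge), whereas you invoke path-completeness directly on the shifted word $(j_2,\dots,j_K,i_K)$, which cleanly sidesteps any appeal to that assumption.
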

\begin{proof}[Proof of Proposition~\ref{Prop:ForwardBackwardLiftsP-C}]
Consider a path-complete graph $\cG = (S,E) $, its composition lifted graph $\cG_{co} = (S_{co},E_{co})$ and a switching signal $\sigma$ of length $k \in \N$. By path-completeness of $\cG$, there exist $k$ consecutive edges in $E$ denoted by $e_j \, = \, (a_j , a_{j+1}, \sigma(j)) $ where $a_j \in S$ for every $j$. For each edge $e_k$ with $k = 1, \hdots, j$, $\tilde{e}_k \, = \,  \left( a_k \sigma({k-1}), a_{k+1} \sigma(k), \sigma(k-1) \right) \in E_{co}$. Using Assumption~\ref{Assum:SmallestGraph}, there is an outgoing edge $(a_{k+1},a_{k+2},\sigma({k+1})) \in E $ from node $a_{k+1}$. Hence, the edge $(a_{k+1}\sigma(k) , a_{k+2} \sigma({k+1}), \sigma(k)) \in E_{co}$ which concludes the proof.
\end{proof}

We prove a similar result to Theorem~\ref{Thm:GeneralThmLifts} for the composition lift in Definition~\ref{def:ForwardCompoLift}. By extension of Definition~\ref{Def:ClosurePropertiesTemplate}, we say that a template $\cV = \cup_{n \in \N} \cV_n$ is \emph{closed under composition with the dynamics of $\cF$} if for any $n \in \N$, for all $V \in \cV_n$ and $f \in \cF \cap \cC^0(\R^n,\R^n)$, the composition $V \circ f \in \cV_n$.
\begin{thm}\label{Thm:CompositionLifts}
The composition lift is valid with respect to any family $\cF$ of systems and any template $\cV$ closed under composition with the dynamics of $\cF$.
\end{thm}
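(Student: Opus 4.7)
The plan is to mimic the constructive style of the proof of Theorem~\ref{Thm:GeneralThmLifts}: start from an admissible set $\{V_s\mid s\in S\}\subseteq \cV$ for $(\cG,F)$ and exhibit an explicit family of candidate Lyapunov functions indexed by the nodes of $\cG_{\text{co}}$ that (i) lies in $\cV$ thanks to the closure hypothesis and (ii) satisfies the Lyapunov inequalities encoded by the edges of $\cG_{\text{co}}$.

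The natural definition, suggested by the ``shifted'' labeling of the nodes $s\circ i$, is
\[
W_{s\circ i}(x)~:=~V_s(f_i(x)),\qquad \forall s\in S,\ \forall i\in\M,\ \forall x\in\R^n.
\]
Since $\cV$ is closed under composition with the dynamics of $\cF$, each $W_{s\circ i}$ belongs to $\cV_n$ (in particular it is still positive definite and radially unbounded, which is the content of the closure assumption). Hence $\{W_{s\circ i}\mid s\circ i\in S_{\text{co}}\}\subseteq \cV$.

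It remains to verify the inequality~\eqref{eq:LyapunovInequalities} along every edge of $\cG_{\text{co}}$. By Definition~\ref{def:ForwardCompoLift}, an arbitrary edge of $E_{\text{co}}$ has the form $(a\circ j,b\circ i,j)$ for some $(a,b,i)\in E$ and some $j\in \M$. Unwinding the definition of the $W$'s yields
\[
W_{b\circ i}(f_j(x))~=~V_b\bigl(f_i(f_j(x))\bigr)~\leq~V_a(f_j(x))~=~W_{a\circ j}(x),
\]
where the inequality is precisely the Lyapunov inequality on $\cG$ associated with $(a,b,i)\in E$, applied at the point $f_j(x)\in\R^n$. This establishes that $\{W_{s\circ i}\}\in PCLF(\cG_{\text{co}},F)$, and hence $\cG\leq_{\cV,\cF}\cG_{\text{co}}$ as required.

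No real obstacle is expected: the crux is identifying the right indexing of the lifted Lyapunov functions, and the node labels of $\cG_{\text{co}}$ (which remember both the \emph{current} mode of $\cG$ and the mode \emph{used to reach} the current node) already dictate the choice $W_{s\circ i}=V_s\circ f_i$. The only points worth double-checking are that the closure assumption is exactly what is needed to keep $W_{s\circ i}$ inside the template $\cV_n$, and that path-completeness of $\cG_{\text{co}}$ (needed for $\cG_{\text{co}}$ to be an admissible candidate at all) is already provided by Proposition~\ref{Prop:ForwardBackwardLiftsP-C}.
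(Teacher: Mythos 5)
Your proposal is correct and follows essentially the same route as the paper: the same assignment $W_{s\circ i}:=V_s\circ f_i$, membership in $\cV$ via the closure-under-composition hypothesis, and verification of each edge $(a\circ j,b\circ i,j)\in E_{\text{co}}$ by applying the inequality for $(a,b,i)\in E$ at the point $f_j(x)$. No discrepancies to report.
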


\begin{proof}[Proof of Theorem~\ref{Thm:CompositionLifts}]
Consider a family of systems $\cF$, a system $F = \{f_i\}_{i\in \M} \subseteq \cF$ and a template $\cV$ closed under composition with any dynamics of any system in $\cF$. Suppose that there exists a PCLF for an initial path-complete graph $\cG=(S,E)$ of the form $\{V_s \mid s \in S\} \subset \mathcal{V}$. Given $s \in S$ and $i \in \langle M \rangle$, the corresponding Lyapunov function $W_{s \circ i}$ is defined by 
\begin{equation} \label{def:LyapFunctionsForwardCompoLift}
    \forall x \in \R^n:~W_{s \circ i}(x)~:=~\left(V_s \circ f_i \right) (x).
\end{equation}
Given $(a \circ j,b \circ i,j)\in E_{co}$,  we have 
\[W_{b \circ i}(f_j(x))~=~ V_b \left( f_i  \left(f_j(x)\right) \right)~\leq~V_a \left(f_j(x)\right)~=~W_{a \circ j}(x), \]
for any $x\in \R^n$, since, by Definition~\ref{def:ForwardCompoLift},  $(a,b,i) \in E$ and the Lyapunov inequality encoded by this edge is satisfied by the Lyapunov functions $\{V_s \mid s \in S \}$, especially when they are evaluated at points of the form $y = f_j(x)$. 
\end{proof}

\begin{rem}\label{Rem:BackwardLift}
If the dynamics in a family $\cF$ are invertible, a similar lift to the composition lift in Definition~\ref{def:ForwardCompoLift}, referred as the \emph{backward composition lift}, can be defined. In this case, the Lyapunov functions associated to the nodes of the lifted graph are defined as the composition with the inverse dynamics, i.e. 
\[ \forall x \in \R^n:~W_{s \circ i}(x)~:=~\left( V_s \circ {f_i}^{-1} \right)(x). \]
In a similar way, one can prove that this lift is valid with respect to any family $\cF$ of systems with invertible dynamics and any template $\cV$ closed under composition with the inverse dynamics of $\cF$.
\hfill $\triangle$
\end{rem}

\begin{exmp}\label{ex:forwardCompositionLift}
Consider the path-complete graph $\cG_4 = (S_4,E_4)$ on $2$ modes in Figure~\ref{Fig:ADHS_initial_min_lift}. We apply the composition lift in Definition~\ref{def:ForwardCompoLift} to $\cG_4$ and we get the graph $(\cG_4)_{co}$ in Figure~\ref{Fig:CompoLift}. As expected from Definition~\ref{def:ForwardCompoLift}, the lifted graph has $4$ nodes, one for each pair $\{s,i\} \subseteq S_4 \times \langle M \rangle$. Observe that the graph obtained with the composition lift $(\cG_4)_{co}$ admits a strongly connected component isomorphic to the dual graph of $\cG_4$. By Theorem~\ref{Thm:CompositionLifts} and Proposition~\ref{Prop:InegalityHoldsForSubgraph}, it implies that 
\[ \cG_4~\leq_{\cV,\cF}~(\cG_4)^\top,\]
for any family $\cF$ of systems and any template $\cV$ closed under composition with the dynamics of $\cF$. Actually, the application of the backward composition lift in Remark~\ref{Rem:BackwardLift} to $(\cG_4)^\top$ generates a component isomorphic to $\cG_4$. This means that $(\cG_4)^\top~\leq_{\cV,\cF}~\cG_4$ for any family $\cF$ of systems of invertible dynamics and any template $\cV$ closed under composition with the inverse dynamics of $\cF$, and hence the graph $\cG_4$ and its dual $(\cG_4)^\top$ are equivalent in the sense of (\ref{Def:G1_smaller_G2_V_F_fixed}) for any family $\cF$ of systems with invertible dynamics and any template $\cV$ closed under composition with the dynamics of $\cF$ and their inverse. This result generalises~\cite[Example 3.9.]{PJ:19} where the equivalence of these graphs is proved for the invertible linear switching systems and the template of quadratic functions. Note that the proof there uses an ad hoc technique, which cannot be generalised easily to arbitrary settings. \hfill $\triangle$
\end{exmp}

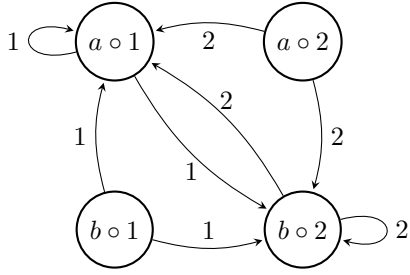
\begin{figure}[t!]
   \color{black}
  \centering
  \begin{tikzpicture}%
  [>=stealth,
  shorten >=1pt,
  node distance=1cm,
  on grid,
  auto,
  every state/.style={draw=black, fill=white, thick}
  ]
  \node[state] (left1)                 {$a \circ 1$};
  \node[state] (right1) [right=of left1, xshift=1.5cm] {$a \circ 2$};
  \node[state] (left2) [below =of left1, yshift=-1.5cm]{$b \circ 1$};
  \node[state] (right2) [below =of right1, yshift=-1.5cm]{$b \circ 2$};
  \path[->]
  (left1) edge[loop left=60]     node                      {1} (left1)
  (right1) edge[bend right=15]     node                      {2} (left1)
  (left2) edge[bend left=15]     node                      {1} (left1)
  (right1) edge[bend left=15]     node                    {2} (right2)
  (right2) edge[bend right=15]     node[above]                      {2} (left1)
  (left2) edge[bend right=15]     node                     {1} (right2)
  (left1) edge[bend right=15]     node[below]                     {1} (right2)
  (right2) edge[loop right=15]     node                      {2} (right2)
;
\end{tikzpicture}
\caption{The composition lifted graph $(\cG_4)_{co}$}
\label{Fig:CompoLift}
\end{figure}

\section{Application to Positive switching Systems and Copositive Linear Norms}
\label{sec:Positive}
In this section we apply the ideas previously developed in studying stability of \emph{positive} switching systems of the form
\begin{equation}\label{eq:PosSystem}
x(k+1) ~=~A_{\sigma(k)}x(k)
\end{equation}
where $\sigma:\N\to \M$, and $\cA:=\{A_1,\dots, A_M\}\subset\R^{n\times n}$ is a set of nonnegative matrices.
We recall that a matrix $A\in \R^{n\times n}$ is said to be \emph{nonnegative} if it has entries in $\R_{\geq 0}$ ($A\in \R^{n\times n}_{\geq 0}$), \emph{positive} if it is nonnegative and nonzero, and \emph{strictly positive} if $A\in \R^{n\times n}_{>0}$. If $A\in \R^{n\times n}_{\geq 0}$ we have that $Ax\in \R^n_{\geq 0}$ for all $x\in \R_{\geq 0}$. If, moreover, $A$ is also invertible, we have $Ax\in \R^n_{>0}$ for all $x\in \R_{> 0}$. Positive switching systems~\eqref{eq:PosSystem} are popular for modeling the dynamics of phenomena constrained in the positive cone $\R^n_{\geq0}$, and from our point of view provides a simple ``practical'' setting in order to illustrate the developments of previous sections.
\subsection{Copositive norms:  closure properties, duality, and valid lifts}\label{subsec:CopNorms}
In this section we will consider and study two particular functions templates, the \emph{primal/dual linear copositive norms}. In particular, we investigate their closure properties and the duality correspondence between them, in order to apply the ideas presented in Section~\ref{sec:Lifts} to the stability analysis of positive switching systems as in~\eqref{eq:PosSystem}.

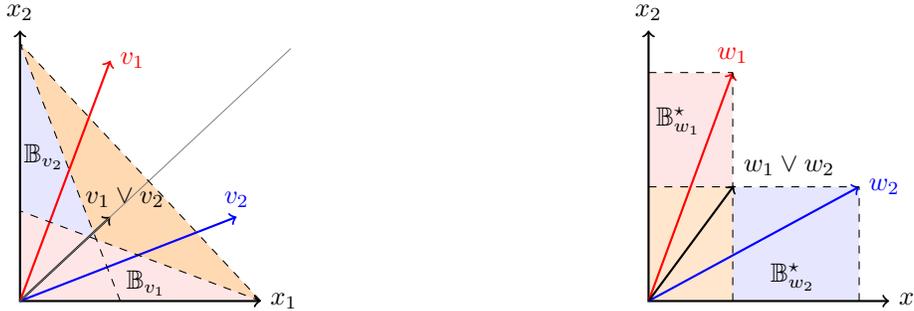
\begin{figure}[b!]
\begin{subfigure}{0.48\linewidth}
  \color{black}
  \centering
  \begin{tikzpicture}[scale=0.8]
    \fill[fill=orange!30 ] (0,0)-- (3.99999,0)--(0,4.2857);
    \fill[fill=blue!10 ] (0,0)-- (1.666666,0)--(0,4.2857);
\fill[fill=red!10 ] (0,0) --(3.99999,0)--(0,1.5);
    \draw [<->,thick] (0,4.5) node (yaxis) [above] {$x_2$}
        |- (4,0) node (xaxis) [right] {$x_1$};
         \draw[->, red,thick](0,0) -- (1.5,4) node [right]{$v_1$};
    \draw [->, blue, thick](0,0)--(3.6, 1.4) node [above] {$v_2$};
    \draw [->, black, thick](0,0)--(1.5, 1.4) node [ above] {$\;\;\;\;v_1\vee v_2$};
    \draw [-, gray, ultra thin](0,0)--(4.5, 4.2) node [right]{};
    \draw[dashed] (3.9999,0)--(0,1.5) ;
    \draw[dashed] (1.6666,0)--(0,4.2857) ;
        \draw[dashed] (3.9999,0)--(0,4.2857) ;
        \node at (0.4, 2.4)   (a) { $\mathbb{B}_{v_2}$};
\node at (2.1, 0.3)   (a) { $\mathbb{B}_{v_1}$};
  \end{tikzpicture}
  \caption{The unit balls $\B_{v_1}$ and $\B_{v_2}$ are represented in red and blue, respectively. In orange, the ball $\mathbb{B}_{v_1\vee v_2}=\B_{v_1}\ssquare \,\B_{v_2}$.} 
  \label{Fig:PrimalNormsBalls}
  \end{subfigure}
  \hskip0.3cm
   \begin{subfigure}{0.48\linewidth}
  \color{black}
  \centering
  \begin{tikzpicture}[scale=0.8]
    \fill[fill=red!10 ] (6,0) rectangle (7.4,3.8);
    \fill[fill=blue!10 ] (6,0) rectangle (9.5,1.9);
    \fill[fill=orange!20 ] (6,0) rectangle (7.4,1.9);
    \draw [<->,thick] (6,4.5) node (yaxis) [above] {$x_2$}
        |- (10,0) node (xaxis) [right] {$x_1$};
    \draw[->, red,thick](6,0) -- (7.4,3.8) node [above]{$w_1$};
    \draw [->, blue, thick](6,0)--(9.5, 1.9) node [right] {$w_2$};
    \draw [->, black, thick](6,0)--(7.42, 1.92) node [above right] {$w_1\vee w_2$};
\node at (8.4, 0.4)   (a) { $\mathbb{B}^\star_{w_2}$};
\node at (6.5, 3)   (a) { $\mathbb{B}^\star_{w_1}$};
    \draw[dashed] (7.4,0)--(7.4,3.8) ;
    \draw[dashed] (6,3.8)--(7.4,3.8) ;
      \draw[dashed] (6,1.9)--(9.5,1.9) ;
    \draw[dashed] (9.5,0)--(9.5,1.9) ;
\end{tikzpicture}
\caption{The unit balls $\B^\star_{w_1}$ and $\B^\star_{w_2}$ are represented in red and blue, respectively. In orange, we represented $\mathbb{B}^\star_{w_1\vee w_2}=\B^\star_{w_1}\cap \B^\star_{w_2}$.} 
  \label{Fig:DualNormsBalls}
  \end{subfigure}
  \caption{Examples of primal and dual copositive norms on $\R^2_{\geq 0}$, with the corresponding unit balls.}
  \label{Fig:PRimalDualnorms}
  \end{figure}
  
\begin{defn}\label{defn:CopNorms}
Given $v\in \R^n_{>0}$, we define the \emph{primal} and \emph{dual linear copositive norms induced by $v$} on $\R^n_{\geq 0}$ by
\begin{align}
g_v(x)&:=v^\top x,\,\,\,\, \,\,\,\,\,\,\text{and}\label{eq:PrimalNorms}\\
g_v^\star(x)&:=\max_i\left\{\frac{x_i}{v_i}\right\},\label{eq:DualNorms}
\end{align}
for all $ x\in \R^n_{\geq 0}$. We denote with $\cP$ and $\cD$ the set of all primal and dual copositive norms, respectively.
\end{defn}
One can show that the norms defined in expression~\eqref{eq:DualNorms} is exactly the dual of the one in~\eqref{eq:PrimalNorms}, in the sense of Definition~\ref{defn:DualNorms} in Appendix. See Figure~\ref{Fig:PRimalDualnorms} for a graphical interpretation of this class of functions.
In the context of positive switching systems, copositive norms as in~\eqref{eq:PrimalNorms} were considered, among many other examples, in~\cite{MasSho07,ForVal12}. In order to highlight the closure properties of these templates, we adopt the following notation; given $v,w\in \R^n_{>0}$, define $v\vee w\in \R^n_{>0}$ as 
\begin{equation}\label{eq:CompMin}
v \vee w~:=~\sum_i \min\{v_i,w_i\}\mathbf{e}_i,
\end{equation}
 i.e. the componentwise minimum between $v$ and $w$. 
In our preliminary paper~\cite{DDLJ21}, we focused on properties of primal copositive norms only. In what follows, we provide the statements and proofs for dual copositive norms, and moreover we show how, thanks to the convex-duality theory summarized in~\ref{Sec:Appendix}, the corresponding properties are re-obtained, as simple corollaries, for primal norms.
\begin{prop}[Properties of Dual Norms] \label{pro:DualProp}
 Given any $v,w\in  \R^n_{> 0}$, any $A\in \R^{n\times n}_{\geq 0}$, any $\lambda>0$ we have
 \begin{enumerate}[label=\emph(\arabic*$^d$)]
 \item $g_{v+w}^\star=g_v^\star\; \sharp\; g_w^\star$ (inverse summation\footnote{For the formal definition of the \emph{inverse summation} operation between convex functions (and the dual operation of \emph{infimal convolution}), see Properties~\ref{propertyis:Convexfubctuib} in~\ref{Sec:Appendix}} of dual norms is a dual norm) ;
 \item $g^\star_{\lambda v}=\frac{1}{\lambda}g_v^\star$; 
 \item $g^\star_{v\vee w}=\max\{g^\star_v, g^\star_w\}$ (max of dual norms is a dual norm); 
 \item  $\left(\forall x\in\R^n_{\geq 0}, \,\,\,g^\star_v(Ax)\leq g^\star_w(x)\right)\,\,\,\Leftrightarrow \,\,\, Aw\leq_c v$. \label{Item:4dual}
 \end{enumerate}
 \end{prop}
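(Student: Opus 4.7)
The plan is to prove items (1$^d$)--(4$^d$) essentially from the defining formula $g^\star_v(x) = \max_i \{x_i/v_i\}$, invoking the convex-duality machinery of~\ref{Sec:Appendix} only where it genuinely shortens the argument.

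I would dispatch the algebraically immediate items first. For (2$^d$), pulling $1/\lambda$ out of the maximum suffices. For (3$^d$), the componentwise identity $1/\min\{v_i, w_i\} = \max\{1/v_i, 1/w_i\}$ combined with commuting two maxima gives $g^\star_{v\vee w}(x) = \max\{g^\star_v(x), g^\star_w(x)\}$ directly.

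For \ref{Item:4dual}, I would handle necessity by specializing the hypothesis to $x = w$: then $g^\star_v(Aw) \leq g^\star_w(w) = 1$ rewrites by definition of $g^\star_v$ as $Aw \leq_c v$. Conversely, given $Aw \leq_c v$ and any $x \in \R^n_{\geq 0}$, set $c := g^\star_w(x)$, so that $x \leq_c c\,w$ componentwise; nonnegativity of $A$ then chains $Ax \leq_c c\,Aw \leq_c c\,v$, which reads $g^\star_v(Ax) \leq c = g^\star_w(x)$.

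The main obstacle is (1$^d$), where the inverse-summation operation $\sharp$ enters. My preferred route is to argue at the level of unit balls: the sublevel set $\B^\star_{v+w}$ is the axis-aligned box $\prod_i [0, v_i+w_i]$, which is exactly the Minkowski sum $\B^\star_v + \B^\star_w$ of the two coordinate boxes $\prod_i[0,v_i]$ and $\prod_i[0,w_i]$. Since on positively homogeneous convex functions the operation $\sharp$ is characterized precisely by the Minkowski-sum rule on unit balls (as recalled in~\ref{Sec:Appendix}), the identity $g^\star_{v+w} = g^\star_v \,\sharp\, g^\star_w$ follows. Should a self-contained verification be preferred, it can be carried out by exhibiting the explicit splitting $u_i = v_i x_i/(v_i+w_i)$, $u'_i = w_i x_i/(v_i+w_i)$, which achieves $\max\{g^\star_v(u), g^\star_w(u')\} = g^\star_{v+w}(x)$, and then combining it with the elementary weighted-average bound $(u_i + u'_i)/(v_i + w_i) \leq \max\{u_i/v_i, u'_i/w_i\}$ applied to any competitor decomposition $x = u + u'$ to obtain the matching lower bound.
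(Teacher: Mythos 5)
Your proposal is correct and follows essentially the same route as the paper: items (2$^d$)--(4$^d$) are read off directly from the defining formula, and (1$^d$) is reduced via the unit-ball correspondence to the Minkowski-sum identity $\B^\star_{v+w}=\B^\star_v+\B^\star_w$, with your explicit splitting $u_i=v_ix_i/(v_i+w_i)$, $u_i'=w_ix_i/(v_i+w_i)$ being exactly the decomposition the paper uses for the nontrivial inclusion. The only difference is cosmetic: you spell out the ``trivial'' items and offer an equivalent self-contained verification of (1$^d$) via the infimum formula for $\sharp$ and the mediant inequality, both of which check out.
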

\begin{proof}
Items \emph{(2$^d$)}, \emph{(3$^d$)} and \emph{(4$^d$)} are trivial from the definition in~\eqref{eq:DualNorms} (see also Figure~\ref{Fig:DualNormsBalls} for a graphical interpretation).  Given any $v\in \R^n_{>0}$, we adopt the notion $\B_v^\star:=\{x\in \R^n_{\geq 0}\;\;\vert \;\;g_v^\star(x)\leq 1\}$. For Item~\emph{(1$^d$)}, let us consider $v,w\in \R^n_{>0}$; using  Lemma~\ref{lemma:COrrespondenceBall} in Appendix, we have to show that $\B^\star_{v+w}=\B^\star_v+\B^\star_w$. \\
$(\supseteq)$: Consider $x\in \B^\star_v$ and $y\in \B^\star_w$, i.e. $x_i/v_i\leq 1$ and $y_i/w_i\leq 1$, for all $i \in \{1,\dots, n\}$. It follows that $x+y\in \B^\star_{v+w}$ since $x_i\leq v_i$ and $y_i\leq w_i$ implies $x_i+y_i\leq v_i+w_i$, for all $i\in \{1,\dots, n\}$.\\
$(\subseteq)$: Consider $z\in \B^\star_{v+w}$, i.e. $\frac{z_i}{v_i+w_i}\leq 1$, for all $i\in \{1,\dots, n\}$. Let us define $x,y\in \R^n_{\geq 0}$ by $x_i:=\frac{v_i}{v_i+w_i}z_i$ and $y_i:=\frac{w_i}{v_i+w_i}z_i$, for all $i\in \{1,\dots, n\}$. It is easy to see that $x+y=z$; moreover $x\in \B^\star_v$ and $y\in \B^\star_w$ since
\[
\frac{x_i}{v_i}=\frac{v_i}{v_i(v_i+w_i)}z_i=\frac{z_i}{v_i+w_i}\leq 1, \;\;\;\forall i\in \{1,\dots, n\},
\]
and similarly for $y$, concluding the proof.
\end{proof}
Item \emph{(1)$^d$} shows that the template $\cD$ is closed under the so-called inverse summation (see also \cite[Section 5]{RockConv}). In Section~\ref{sec:Lifts} we introduced the T-sum Lift (Definition~\ref{Def:SumLift}), which, intuitively, exploits and explores inequalities involving sums of the node-functions. We show next that, under the hypothesis of \emph{linear} sub-dynamics, the T-sum lift can provide information also on inverse summations of the node-functions, and it can thus be used for path-complete comparison purpose when the template is the set of dual copositive norms.
\begin{lem}\label{lem:MinUnderLinearMap}
Let $v_1,w_1, v_2, w_2\in \R^n_{>0}$ and $A\in\R^{n \times n}_{\geq 0}$. Then
\[ \left(\forall\,x\in \R^n_{\geq 0},\,\,\,g^\star_{v_2}(Ax)\leq g^\star_{v_1}(x)\;\;\wedge\; \;g^\star_{w_2}(Ax)\leq g^\star_{w_1}(x)\right) ~ \Rightarrow ~  \left(\forall \,x\in \R^n_{\geq 0},\,\,\,g^\star_{v_2+ w_2}(Ax)\leq g^\star_{v_1+ w_1}(x)\,\right).
\]
\end{lem}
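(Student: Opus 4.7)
The plan is to translate the functional inequalities on dual copositive norms into componentwise vector inequalities using the characterization provided by Proposition~\ref{pro:DualProp}\ref{Item:4dual}. Once in that form, the conclusion will be immediate because componentwise order on $\R^n_{\geq 0}$ is preserved under addition and under left-multiplication by a nonnegative matrix.

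More concretely, I would first apply Proposition~\ref{pro:DualProp}\ref{Item:4dual} to each of the two hypotheses separately. The inequality $g^\star_{v_2}(Ax)\leq g^\star_{v_1}(x)$ for every $x\in \R^n_{\geq 0}$ is equivalent to $A v_1 \leq_c v_2$, and similarly the second hypothesis rephrases as $A w_1 \leq_c w_2$. Then I would add these two vector inequalities componentwise, using linearity of $A$, to obtain $A(v_1+w_1) = A v_1 + A w_1 \leq_c v_2 + w_2$.

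Finally, since $v_1+w_1, v_2+w_2 \in \R^n_{>0}$ (so that the dual copositive norms $g^\star_{v_1+w_1}$ and $g^\star_{v_2+w_2}$ are well defined), I would invoke Proposition~\ref{pro:DualProp}\ref{Item:4dual} in the reverse direction, applied to the vectors $v_1+w_1$ and $v_2+w_2$, to conclude that $g^\star_{v_2+w_2}(Ax)\leq g^\star_{v_1+w_1}(x)$ for all $x\in \R^n_{\geq 0}$.

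There is no real obstacle here: the content of the lemma is essentially a restatement, at the level of the vectors parametrizing the dual copositive norms, of the elementary fact that the sum of two componentwise inequalities is again a componentwise inequality. The only point deserving attention is to keep track of which vector plays the role of $v$ and which the role of $w$ in the equivalence of Proposition~\ref{pro:DualProp}\ref{Item:4dual}, but once this bookkeeping is done the argument is a two-line computation.
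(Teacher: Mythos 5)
Your argument is correct, and the bookkeeping in Item \emph{(4$^d$)} of Proposition~\ref{pro:DualProp} is handled properly: the hypothesis $g^\star_{v_2}(Ax)\leq g^\star_{v_1}(x)$ does translate to $Av_1\leq_c v_2$, the second to $Aw_1\leq_c w_2$, adding gives $A(v_1+w_1)\leq_c v_2+w_2$, and reading the equivalence backwards yields exactly the claimed conclusion. However, your route is genuinely different from the paper's. The paper does not pass through the parametrizing vectors at all: it invokes Item \emph{(1$^d$)} (inverse summation) together with Lemma~\ref{lemma:COrrespondenceBall} to identify the unit ball of $g^\star_{v+w}$ with the Minkowski sum $\B^\star_v+\B^\star_w$, and then checks the inclusion $A(\B^\star_{v_1}+\B^\star_{w_1})\subseteq \B^\star_{v_2}+\B^\star_{w_2}$ by splitting $x=y+z$ and using linearity of $A$. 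That geometric argument uses only the linearity of $A$ and the Minkowski-sum structure, so it is the version that would survive for more general templates closed under inverse summation; it is also the one that dualizes cleanly via the appendix machinery. Your proof instead exploits the specific fact that dual copositive norms are parametrized by vectors in $\R^n_{>0}$ and that the relevant Lyapunov inequality is equivalent to a componentwise inequality on those vectors, which makes the lemma a one-line consequence of adding two vector inequalities. This is shorter and more elementary in the present setting, at the cost of being tied to this particular template. Either proof is acceptable here.
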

\begin{proof}
Using Lemma~\ref{lemma:COrrespondenceBall} in~\ref{Sec:Appendix}, we have to show that
$
x\in  \B^\star_{v_1} +\B^\star_{w_1}\;\;\Rightarrow\;\;Ax\in  \B^\star_{v_2} +\B^\star_{w_2}$,
knowing that $x\in \B^\star_{v_1}\;\Rightarrow Ax\in \B^\star_{v_2}$ and $x\in \B^\star_{v_1}\;\Rightarrow Ax\in \B^\star_{v_2}$. Consider $x\in  \B^\star_{v_1} +\B^\star_{w_1}$, i.e. $x=y+z$, with $y\in \B^\star_{v_1}$ and $z\in \B^\star_{w_1}$. Computing, we have $Ax=A(y+z)=Ay+Az\in \B^\star_{v_2}+\B^\star_{w_2}$, concluding the proof.
\end{proof}
Now that we have studied the analytical properties of the template of copositive dual norms $\cD$, we are able to state the corresponding consequences on the validity of the template dependent lifts presented in Section~\ref{sec:Lifts}.
 \begin{thm}[Valid Lifts for Dual Copositive Norms]\label{thm:ValidLiftDUalsNorms}
 Consider $\cG=(S,E)$ a path-complete graph on $\M$, and any $T\in \N$. Denote  by $\cL$ the set of all the $M$-tuples of nonnegative matrices. We have
 \begin{itemize}
\item $\cG~\leq_{\cD,\cL}\,\cG^{\oplus T}$,
\item $\cG~\leq_{\cD,\cL} \,\cG_{\text{max}}$,
    \end{itemize}
\end{thm}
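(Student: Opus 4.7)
The plan is to treat the two items separately, since they rely on different analytical closure properties of $\cD$ established in Proposition~\ref{pro:DualProp}.

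For the second inequality $\cG \leq_{\cD,\cL} \cG_{\max}$, the work is essentially done by the theory of Section~\ref{sec:Lifts}. Indeed, Proposition~\ref{pro:DualProp}, Item \emph{(3$^d$)}, says that the pointwise maximum of two dual copositive norms is again a dual copositive norm, i.e. $\cD$ is closed under the pointwise maximum. Theorem~\ref{Thm:GeneralThmLifts} then yields $\cG \leq_{\cD} \cG_{\max}$, which in turn implies the desired relation for the restricted family $\cL$ of $M$-tuples of nonnegative matrices. So this bullet is a one-line consequence of results already proved; the real content lies in the first bullet.

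For the first inequality $\cG \leq_{\cD,\cL} \cG^{\oplus T}$, the naive route fails because $\cD$ is \emph{not} closed under addition; rather, Proposition~\ref{pro:DualProp} Item \emph{(1$^d$)} tells us that it is closed under \emph{inverse summation}, and the identity $g^\star_{v+w}=g^\star_v \sharp g^\star_w$ connects this operation back to a dual norm $g^\star_{v+w}$ built from the summed parameter vectors. This is the key hook. Let $F=\{A_i\}_{i\in\M}\in\cL$ and suppose $\{g^\star_{v_s} \mid s\in S\}\subset \cD$ is a PCLF for $(\cG,F)$. For each multiset $\overline{a}=\{a_1,\dots,a_T\}\in S^{\oplus T}$ define
\[
W_{\overline{a}}~:=~g^\star_{v_{a_1}+\cdots+v_{a_T}}\in \cD,
\]
which is well defined because addition of the $v_{a_k}$'s is commutative, so the construction depends only on the multiset. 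I would then fix an edge $(\overline{a},\overline{b},i)\in E^{\oplus T}$: by Definition~\ref{Def:SumLift}, after a suitable reordering, we have $(a_k,b_k,i)\in E$ for $k=1,\dots,T$, and hence $g^\star_{v_{b_k}}(A_i x)\leq g^\star_{v_{a_k}}(x)$ for all $x\in \R^n_{\geq 0}$.

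The remaining task is to \emph{aggregate} these $T$ inequalities into the single inequality $W_{\overline{b}}(A_i x)\leq W_{\overline{a}}(x)$. This is exactly what Lemma~\ref{lem:MinUnderLinearMap} accomplishes for $T=2$, and I would iterate it $T-1$ times: grouping $v_{a_1}+v_{a_2}$ on the left and $v_{b_1}+v_{b_2}$ on the right gives $g^\star_{v_{b_1}+v_{b_2}}(A_i x)\leq g^\star_{v_{a_1}+v_{a_2}}(x)$; combining this with the inequality for the index $3$ via the lemma gives the claim for the three-term sum, and so on. Note that the nonnegativity of $A_i$ (i.e. $F\in\cL$) is essential for the lemma to apply at every step—this is precisely why the inequality is stated only for the restricted family $\cL$ rather than for arbitrary continuous vector fields. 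The main subtlety, and the point I expect to have to state carefully, is this iterative use of Lemma~\ref{lem:MinUnderLinearMap}, together with the observation that the resulting $W_{\overline{a}}$ does indeed live in the template $\cD$ thanks to Proposition~\ref{pro:DualProp} Item \emph{(1$^d$)}.
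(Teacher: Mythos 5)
Your proof is correct and takes essentially the same route as the paper's, whose proof is a one-line citation of Theorem~\ref{Thm:GeneralThmLifts}, Proposition~\ref{pro:DualProp} and Lemma~\ref{lem:MinUnderLinearMap}: closure under pointwise maximum (Item \emph{(3$^d$)}) handles $\cG_{\max}$ directly via Theorem~\ref{Thm:GeneralThmLifts}, while the $T$-sum case replaces the sum of node-functions by the dual norm of the summed vectors and aggregates the edge inequalities with Lemma~\ref{lem:MinUnderLinearMap}. You merely make explicit the $(T-1)$-fold iteration of that lemma, which the paper leaves implicit.
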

\begin{proof}
The proof follows from  Theorem~\ref{Thm:GeneralThmLifts} and by Proposition~\ref{pro:DualProp} and Lemma~\ref{lem:MinUnderLinearMap}.
\end{proof}

Now, thanks to the convex duality theory summarized in Appendix, we can straightforwardly obtain similar results for the template $\cP$ of \emph{primal} copositive norms. The same statements (but with direct proofs) can be found in our preliminary paper~\cite{DDLJ21}.

\begin{prop}[Properties of Primal Norms]\label{prop:Primal}
Given any $v,w\in  \R^n_{> 0}$, any $A\in \R^{n\times n}_{> 0}$, any $x\in \R^n_{\geq 0}$, any $\lambda>0$ we have
\begin{enumerate}[label=\emph(\arabic*$^p$)]
\item $g_{v+w}=g_v+g_w$ (sum of dual norms is a dual norm);
\item $g_{\lambda v}=\lambda g_v$;
\item $g_{v \vee w}=g_v\ssquare g_w$ \label{Item:Infimalconvolution}(infimal convolution of dual norms is a dual norm);
\item $\left(\forall x\in\R^n_{\geq 0}, \,\,g_v(Ax)\leq g_w(x)\right)\,\,\,\Leftrightarrow \,\,\, A^\top v\leq_c w$. \label{item:4Primal}
\end{enumerate}
\end{prop}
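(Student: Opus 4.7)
The plan is to transport each statement of Proposition~\ref{pro:DualProp} to its primal counterpart via the convex-duality correspondence $g_v \leftrightarrow g_v^\star$ and the standard dictionary of dual operations recalled in~\ref{Sec:Appendix}. In parallel, items (1$^p$), (2$^p$) and (4$^p$) admit a very short \emph{direct} argument based on the linearity of $x \mapsto v^\top x$, which I would favor for concreteness.

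For items (1$^p$) and (2$^p$), direct computation yields $g_{v+w}(x) = (v+w)^\top x = v^\top x + w^\top x = g_v(x) + g_w(x)$ and $g_{\lambda v}(x) = \lambda v^\top x = \lambda g_v(x)$. These can alternatively be obtained by conjugating identities (1$^d$) and (2$^d$), using the biduality $(g_v^\star)^\star = g_v$ together with the fact that summation and inverse summation are mutually conjugate operations.

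Item (3$^p$) is the only step where convex duality is genuinely needed. Starting from identity (3$^d$), $g^\star_{v \vee w} = \max\{g^\star_v, g^\star_w\}$, I would take the conjugate of both sides. The left-hand side becomes $g_{v \vee w}$ by biduality. The right-hand side, namely the conjugate of a pointwise maximum of two copositive norms, equals the infimal convolution $g_v \ssquare g_w$ via the standard correspondence at the level of unit balls: the unit ball of a pointwise maximum is the intersection of the unit balls, whose polar is the Minkowski sum, which in turn is the unit ball of the infimal convolution, all of which is captured by Lemma~\ref{lemma:COrrespondenceBall}. This yields $g_{v \vee w} = g_v \ssquare g_w$.

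For item (4$^p$), the cleanest route is direct: using $g_v(Ax) = v^\top(Ax) = (A^\top v)^\top x$, the inequality $g_v(Ax) \leq g_w(x)$ on $\R^n_{\geq 0}$ reduces to $\bigl((A^\top v) - w\bigr)^\top x \leq 0$ for every $x \in \R^n_{\geq 0}$, which is equivalent to $A^\top v \leq_c w$ (by testing on the canonical basis $\{\mathbf{e}_i\}$, or, equivalently, by self-duality of the nonnegative cone). The only subtlety to watch for throughout is that the polarity/conjugation identities must be interpreted relative to the cone $\R^n_{\geq 0}$ on which the copositive norms are defined, rather than on all of $\R^n$; this is precisely what Lemma~\ref{lemma:COrrespondenceBall} handles, and represents the only non-routine point of the argument.
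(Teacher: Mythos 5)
Your approach matches the paper's: the paper derives Proposition~\ref{prop:Primal} by duality from Proposition~\ref{pro:DualProp} via Lemma~\ref{lem:DualNormsAppendix}, which is exactly your treatment of item (3$^p$), and your direct computations for (1$^p$), (2$^p$) and (4$^p$) are the "trivial from the definition" arguments the paper invokes for the corresponding dual items. One slip in your justification of (3$^p$): the polar of the intersection $\B_{f_1}\cap\B_{f_2}$ is the convex hull of the union of the polars, i.e. $\B_{f_1}^\circ \ssquare\, \B_{f_2}^\circ$ (Lemma~\ref{lemma:SetsProperties}(5)), \emph{not} their Minkowski sum; the Minkowski sum of unit balls is the unit ball of the inverse summation $\sharp$ (Lemma~\ref{lemma:COrrespondenceBall}(3)), which is the operation appearing in (1$^d$), whereas the infimal convolution $\ssquare$ corresponds to the convex hull of the union (Lemma~\ref{lemma:COrrespondenceBall}(5)). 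The conclusion is unaffected, since the identity you actually need, $(\max\{f_1,f_2\})^\star=f_1^\star\ssquare f_2^\star$, is Lemma~\ref{lem:DualNormsAppendix}(5), but the intermediate sentence should be corrected.
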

For the formal definition of the infimal convolution operation, see Properties~\ref{propertyis:Convexfubctuib} in~\ref{Sec:Appendix}. The proof follows, by duality, using Proposition~\ref{pro:DualProp} and Lemma~\ref{lem:DualNormsAppendix} in~\ref{Sec:Appendix}.
\begin{thm}[Valid Lifts for Primal Copositive Norms]\label{thm:LiftforPrimal}
 Consider $\cG=(S,E)$ a path-complete graph on $\M$ and any $T\in \N$. Denote by $\cL$ the set of all the $M$-tuples of nonnegative matrices. We have
\begin{itemize}
\item $\cG~\leq_{\cP,\cL}\,\, \,\,\,\cG^{\oplus T}$,
\item $\cG~\leq_{\cP,\cL} \,\cG_{\text{min}}$,
    \end{itemize}
\end{thm}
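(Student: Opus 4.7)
The plan is to transfer Theorem~\ref{thm:ValidLiftDUalsNorms} from dual to primal copositive norms via the duality correspondence between Proposition~\ref{prop:Primal}\ref{item:4Primal} and Proposition~\ref{pro:DualProp}\ref{Item:4dual}, treating the two lifts separately.

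For the $T$-sum lift the work is minimal: by Proposition~\ref{prop:Primal}(1$^p$) the template $\cP$ is closed under pointwise addition, so Theorem~\ref{Thm:GeneralThmLifts} directly yields $\cG\leq_{\cP,\cL}\cG^{\oplus T}$ (and in fact the same would hold for any family $\cF$, not just $\cL$).

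The min lift is the substantive case, because $\cP$ is not closed under pointwise minimum (the minimum of two linear functionals is not linear), so Theorem~\ref{Thm:GeneralThmLifts} does not apply directly. Given a PCLF $\{g_{v_s}\}_{s\in S}\subseteq\cP$ for $\cG$ and a system $\{A_i\}_{i\in\M}\in\cL$, Proposition~\ref{prop:Primal}\ref{item:4Primal} rewrites each edge inequality as the componentwise inequality $A_i^\top v_b\leq_c v_a$ for $(a,b,i)\in E$. By Proposition~\ref{pro:DualProp}\ref{Item:4dual} the same componentwise inequalities express that $\{g^\star_{v_s}\}_{s\in S}\subseteq\cD$ is a PCLF for the dual graph $\cG^\top$ paired with the transposed system $\{A_i^\top\}_{i\in\M}\in\cL$. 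The max-lift clause of Theorem~\ref{thm:ValidLiftDUalsNorms} then provides a PCLF on $(\cG^\top)_{\max}$ with transposed dynamics. A short combinatorial verification using Definition~\ref{defn:MinLift} shows $(\cG^\top)_{\max}=(\cG_{\min})^\top$, after which one more application of the duality produces the desired primal-norm PCLF on $\cG_{\min}$, with candidate functions $W_A := g_{v_A}$ where $v_A := \bigvee_{a\in A} v_a$ denotes the componentwise minimum; these lie in $\cP$ because $v_A\in\R^n_{>0}$.

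The only real obstacle, and the reason duality is essential, is precisely the lack of a pointwise-min closure property for $\cP$: validity of the min lift is not a template closure property alone but crucially exploits the linear nonnegative structure of the dynamics in $\cL$, which is exactly what Proposition~\ref{prop:Primal}\ref{item:4Primal} encodes. One could alternatively give a direct proof showing $A_i^\top\bigl(\bigvee_{b\in B} v_b\bigr)\leq_c\bigvee_{a\in A} v_a$ for each $(A,B,i)\in E_{\min}$ by choosing, for each $a\in A$, a witness $b(a)\in B$ via Definition~\ref{defn:MinLift}(b) and passing to componentwise minima, but the duality route makes the parallel with the dual-norm case completely transparent and avoids duplicating arguments.
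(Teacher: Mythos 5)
Your proposal is correct and takes essentially the same route as the paper: the paper's entire proof is the one-line remark that the result ``is obtained, by duality, from Theorem~\ref{thm:ValidLiftDUalsNorms}.'' Your write-up simply makes that duality explicit (the edge-wise equivalence via Items \emph{(4$^p$)}/\emph{(4$^d$)}, the identity $(\cG^\top)_{\max}=(\cG_{\min})^\top$, and the witnesses $g_{v_A}$ with $v_A=\bigvee_{a\in A}v_a$), and correctly isolates the one point the paper glosses over, namely that the min-lift clause is not a pure template-closure fact but relies on the nonnegative linear dynamics.
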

This is obtained, by duality, from Theorem~\ref{thm:ValidLiftDUalsNorms}.
 
Concluding this section, we provide a general duality result for the path-complete criteria comparison problem for primal and dual copositive norms.
\begin{cor}
 Given any $\cG_1,\cG_2$ path-complete graphs on $\M$, we have that:
 \begin{equation}\label{eq:DualityforPrimalDualCopositive}
 \cG_1~\leq_{\cP,\cL}~\cG_2\;\;\;\Leftrightarrow\;\;\;\cG_1^\top~\leq_{\cD,\cL}~\cG_2^\top.
\end{equation}
 \end{cor}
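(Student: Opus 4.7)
The plan is to reduce the duality statement to a single bijection between sets of admissible primal norms and sets of admissible dual norms. Specifically, I would first establish the following claim: for any path-complete graph $\cG=(S,E)$, any $M$-tuple $\{A_i\}_{i \in \M} \in \cL$, and any family of vectors $\{v_s\}_{s \in S} \subseteq \R^n_{>0}$, the primal norms $\{g_{v_s}\}_{s \in S}$ are admissible for $(\cG, \{A_i\})$ if and only if the dual norms $\{g_{v_s}^\star\}_{s \in S}$ are admissible for $(\cG^\top, \{A_i^\top\})$. To prove this, I would use Item~\ref{item:4Primal} of Proposition~\ref{prop:Primal} to rewrite the inequality encoded by an edge $(a,b,i) \in E$ as the componentwise inequality $A_i^\top v_b \leq_c v_a$, and Item~\ref{Item:4dual} of Proposition~\ref{pro:DualProp} to rewrite the inequality encoded by the reversed edge $(b,a,i) \in E^\top$ (with dynamics $A_i^\top$) as $A_i^\top v_b \leq_c v_a$. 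Since the two conditions coincide edge by edge, the claim follows.

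Next, I would use this bijection to prove the forward implication. Assume $\cG_1 \leq_{\cP,\cL} \cG_2$ and pick any $\{B_i\}_{i \in \M} \in \cL$ together with a set of dual norms $\{g_{w_s}^\star\}_{s \in S_1} \subseteq \cD$ that is admissible for $(\cG_1^\top, \{B_i\})$. Applying the bijection with $\cG = \cG_1^\top$ and $\{A_i\} = \{B_i^\top\}$ (and noting that $\cG_1^{\top\top} = \cG_1$), the family $\{g_{w_s}\}_{s \in S_1}$ is admissible for $(\cG_1, \{B_i^\top\})$. Since $\{B_i^\top\}_{i \in \M}$ is still in $\cL$ (transpose of a nonnegative matrix is nonnegative), the hypothesis $\cG_1 \leq_{\cP,\cL} \cG_2$ yields a family $\{g_{v_t}\}_{t \in S_2} \subseteq \cP$ admissible for $(\cG_2, \{B_i^\top\})$. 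Applying the bijection once more, $\{g_{v_t}^\star\}_{t \in S_2} \subseteq \cD$ is admissible for $(\cG_2^\top, \{B_i\})$, which is what we wanted.

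The reverse implication is entirely symmetric: one exchanges the roles of $\cP$ and $\cD$ and of $\cG_i$ and $\cG_i^\top$, starting from $\cG_1^\top \leq_{\cD,\cL} \cG_2^\top$ and a family of primal norms admissible for $(\cG_1, \{B_i\})$, then applying the same bijection twice with the transposed dynamics.

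I do not expect a serious obstacle here; the entire argument is driven by the two Item~(4) characterisations of Propositions~\ref{prop:Primal} and~\ref{pro:DualProp}, and the only point requiring a little care is keeping track of which matrices ($B_i$ or $B_i^\top$) and which graphs ($\cG_i$ or $\cG_i^\top$) appear in each of the two applications of the bijection. No additional structural or closure property of the templates is needed beyond what has already been proved.
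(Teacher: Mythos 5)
Your proposal is correct and follows essentially the same route as the paper: both rest on the edge-by-edge equivalence $g_{v}(Ax)\leq g_{w}(x)$ for all $x\in\R^n_{\geq 0}$ if and only if $g^\star_{w}(A^\top x)\leq g^\star_{v}(x)$ for all $x\in\R^n_{\geq 0}$, obtained from Items \emph{(4$^p$)} and \emph{(4$^d$)} of Propositions~\ref{prop:Primal} and~\ref{pro:DualProp}, and then transfer admissible families between $(\cG,\{A_i\})$ and $(\cG^\top,\{A_i^\top\})$. Your write-up merely makes explicit the double application of this correspondence that the paper leaves implicit when it invokes Lemma~\ref{lemma:DualInequallity}.
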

 \begin{proof}
It suffices to recall that, from Items \emph{(4$^d$)},\emph{(4$^p)$} of Lemmas~\ref{pro:DualProp} and~\ref{prop:Primal}, given any $v,w\in  \R^n_{> 0}$ and any $A\in \R^{n\times n}_{\geq 0}$ the following equivalence holds:
\begin{equation*}
\forall\,x\in \R^n_{\geq 0}\,\,\,g_v(Ax)\leq g_w(x)\,\,\Leftrightarrow\,\,\forall\,x\in \R^n_{\geq 0}\,\,\, g^\star_w(A^\top x)\leq g^\star_v(x).
\end{equation*}
Indeed, this equivalence implies~\eqref{eq:DualityforPrimalDualCopositive}, by the general result presented in Lemma~\ref{lemma:DualInequallity} in Appendix.
 \end{proof}

\subsection{Approximation of JSR with arbitrary accuracy}

In this section we show how the templates $\cP$ and $\cD$ studied in Section~\ref{subsec:CopNorms} can provide an estimation with arbitrary accuracy of the \emph{joint spectral radius} of a set of nonnegative matrices, using a path-complete Lyapunov approach. While the same kind of theoretic estimation was already provided in~\cite[Section 6]{AJPR:14} for generic matrices and quadratic Lyapunov functions, our result will lead to a new hierarchy of \emph{linear programs} (instead of semidefinite programs), thus drastically reducing the computation complexity. We recall how previous hierarchies of LPs (approximating the JSR of nonnegative matrices), as the one proposed in~\cite[Corollary 3]{ProJunBlo10}, require, in general, the computation of long products of matrices in the considered set. Our approach, instead, is not affected by this drawback, which is avoided requiring a more complex structure of the candidate path-complete Lyapunov function, as we will develop in what follows. Moreover, we show how the results concerning \emph{lifts} presented in Section~\ref{sec:Lifts} specialize in this setting, leading to ``smart'' choices of graph structures for the stability analysis.

In estimating the JSR of a set of non-negative matrices $\cA=\{A_1,\dots A_M\}\subset \R^{n\times n}_{\geq 0}$, the first possible approach is to consider \emph{common} Lyapunov function in the templates $\cP$ and $\cD$. We thus denote by $\rho_{\cP}(\cA)$, the quantity
\[
\begin{aligned}
\rho_{\cP}(\cA):=\;\;\;\;\;\;&\inf_{v\in \R^n_{>0},\gamma\geq 0} \gamma\\
&A_i^\top v-\gamma v\leq 0, \;\;\;\;\forall i\in \M.
\end{aligned}
\]
see also \cite{ProJunBlo10}. Intuitively, $\rho_{\cP}(\cA)$ represents the best estimate of the joint spectral radius of $\cA$ one can obtain considering \emph{common copositive primal Lyapunov norms}, recall Item~\ref{item:4Primal} of Proposition~\ref{prop:Primal}.
Similarly we can define $\rho_{\cD}(\cA)$, by
\[
\begin{aligned}
\rho_{\cD}(\cA):=\;\;\;\;\;\;&\inf_{v\in \R^n_{>0}, \gamma\geq 0} \gamma\\
&A_i v-\gamma v\leq 0, \;\;\;\;\forall i\in \M.
\end{aligned}
\]
Again, $\rho_{\cD}(\cA)$ represents the best bound on the JSR of $\cA$ considering common dual copositive Lyapunov norms, recall Item~\ref{Item:4dual} of Proposition~\ref{pro:DualProp}. We recall the following useful properties:
\begin{thm}[\cite{ProJunBlo10}]\label{thm:ConicRadiuss}
Consider $\cA:=\{A_1,\dots ,A_M\}\subset \R^{n \times n}_{\geq 0}$. We have
\begin{enumerate}
    \itemsep0em 
\item $\frac{1}{n}\rho_{\cP}(\cA)\leq \rho(\cA)\leq \rho_{\cP}(\cA)$,
\item $\frac{1}{n}\rho_{\cD}(\cA)\leq \rho(\cA)\leq \rho_{\cD}(\cA)$,
\item $\rho_{\cP}(\cA)=  \rho_{\cD}(\cA^\top)$.
	\end{enumerate}
\end{thm}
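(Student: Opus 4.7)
The plan is to handle the three items in increasing order of difficulty, reusing machinery already established in the paper.

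First, item (3) is essentially bookkeeping: substituting $\cA^\top$ in the linear program defining $\rho_{\cP}$ turns the constraint $(A_i^\top)^\top v \leq \gamma v$ into $A_i v \leq \gamma v$, which is exactly the constraint defining $\rho_{\cD}(\cA)$, so $\rho_{\cP}(\cA^\top) = \rho_{\cD}(\cA)$ on the nose.

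Next, the upper bounds $\rho(\cA) \leq \rho_{\cP}(\cA)$ and $\rho(\cA) \leq \rho_{\cD}(\cA)$ are one-line applications of the PCLF framework. For any pair $(v,\gamma)$ feasible in the LP defining $\rho_{\cP}(\cA)$, Item \emph{(4$^p$)} of Proposition~\ref{prop:Primal} translates the linear constraint $A_i^\top v \leq_c \gamma v$ into the functional inequality $g_v(A_i x) \leq \gamma\, g_v(x)$ for every $x \in \R^n_{\geq 0}$, $i\in\M$. Hence $g_v$ is a common Lyapunov function for the rescaled system $\{A_i/\gamma\}_{i\in\M}$ (on the invariant positive cone), and Theorem~\ref{Thm:PCLFimpliesStability} applied to the graph $\cG_0$ yields $\rho(\cA) \leq \gamma$; taking the infimum over feasible pairs gives the bound. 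The analogous statement for $\rho_{\cD}$ follows identically from Item \emph{(4$^d$)} of Proposition~\ref{pro:DualProp}.

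The heart of the proof is the lower bound $\rho_{\cP}(\cA) \leq n\,\rho(\cA)$. I would fix $\gamma > \rho(\cA)$ and import from classical JSR theory the existence of a norm $N$ on $\R^n$, monotone on $\R^n_{\geq 0}$ and extremal for the rescaled family, i.e.\ $N(A_i x) \leq \gamma\, N(x)$ for all $x \in \R^n_{\geq 0}$ and $i\in\M$; monotonicity can be enforced using that each $A_i$ preserves the positive cone. Setting $v_i := N(\mathbf{e}_i)>0$, I would sandwich $N$ between $g_v$ and $n\, N$ on the positive cone: the triangle inequality gives $N(x) = N\bigl(\sum_i x_i \mathbf{e}_i\bigr) \leq \sum_i x_i N(\mathbf{e}_i) = g_v(x)$, while monotonicity of $N$ together with $x_i \mathbf{e}_i \leq_c x$ yields $v_i x_i \leq N(x)$ for each $i$, so $g_v(x) \leq n\, N(x)$ (the factor $n$ appears because $\R^n_{\geq 0}$ has $n$ extreme rays). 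Chaining these three inequalities produces $g_v(A_i x) \leq n\, N(A_i x) \leq n\gamma\, N(x) \leq n\gamma\, g_v(x)$, so by Item~\emph{(4$^p$)} the pair $(v, n\gamma)$ is feasible for the LP defining $\rho_{\cP}(\cA)$, giving $\rho_{\cP}(\cA) \leq n\gamma$ and, letting $\gamma \downarrow \rho(\cA)$, the claim. Item (2) is then deduced from items (1) and (3) together with the transposition invariance $\rho(\cA) = \rho(\cA^\top)$ of the joint spectral radius.

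The main obstacle is the construction of the monotone extremal norm $N$: this is the only step where the nonnegative structure of $\cA$ is genuinely used beyond formal manipulation, and it is exactly what generates the dimension-dependent factor $n$ in the approximation. Without monotonicity, the right-hand side of the sandwich fails, and without extremality the Lyapunov bound $N(A_i x) \leq \gamma N(x)$ is unavailable.
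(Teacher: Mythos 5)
Your proposal is correct, and its skeleton for Items (2) and (3) coincides exactly with the paper's: Item (3) is read off from the LP constraints, and Item (2) follows from Items (1) and (3) together with $\rho(\cA)=\rho(\cA^\top)$. The genuine difference lies in Item (1): the paper simply cites \cite[Theorem 2.6]{ProJunBlo10} and offers no argument, whereas you supply a self-contained proof. Your upper bound $\rho(\cA)\leq\rho_{\cP}(\cA)$ via Item \emph{(4$^p$)} of Proposition~\ref{prop:Primal} is the standard common-Lyapunov argument (one small gloss: $g_v$ is only defined on the cone, so to pass from decay of $g_v$ along trajectories in $\R^n_{\geq 0}$ to a bound on the operator norms appearing in \eqref{Eq:JSR} you should note that for nonnegative matrices the restriction to the cone already controls the full operator norm, e.g.\ by extending $g_v$ to the monotone norm $x\mapsto v^\top |x|$). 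Your lower bound is the interesting part: sandwiching a monotone norm $N$ satisfying $N(A_ix)\leq\gamma N(x)$ between $g_v$ and $n\,N$ with $v_i=N(\mathbf{e}_i)$, and feeding $(v,n\gamma)$ back into the LP, is a clean and correct derivation of the factor $1/n$; the required $N$ exists for any $\gamma>\rho(\cA)$ by the usual construction $N(x)=\sup_k\max_{i_1,\dots,i_k}\|A_{i_k}\cdots A_{i_1}x\|_1/\gamma^k$, which is automatically monotone on $\R^n_{\geq 0}$ because the $A_i$ are nonnegative and $\|\cdot\|_1$ is an absolute norm. What your route buys is a proof that makes the theorem independent of the external reference and makes explicit where the dimension-dependent constant and the positivity of $\cA$ enter; what the paper's route buys is brevity, delegating exactly this construction to \cite{ProJunBlo10}.
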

\begin{proof}
For Item \emph{1.} see~\cite[Theorem 2.6]{ProJunBlo10}. Item \emph{3.} follows by definition, and finally Item \emph{2.} follows by Items \emph{1.} and \emph{3.} recalling that $\rho(\cA)=\rho(\cA^\top)$.
\end{proof}

By Theorem~\ref{thm:ConicRadiuss}, the approximation guarantees provided by $\rho_{\cP}(\cA)$ and $\rho_{\cD}(\cA)$ are the same. On the other hand, for \emph{a specific set} $\cA\subset \R^{n\times n}_{\geq 0}$, it is possible that one template (primal/dual copositive norms) will lead to a better estimation of the JSR. The following simple example shows how, for a particular set of matrices $\cA$, the choice of primal or dual copositive norms is crucial in estimating the joint spectral radius, and in particular the inequalities in Items \emph{1.} and \emph{2.} of~Theorem~\ref{thm:ConicRadiuss} can be tight.

\begin{exmp}\label{ex:PrimalVSDual}
Fix a dimension $n\in \N$ and consider $\cA:=\{A_1,\dots ,A_n\}\subset \R^{n\times n}_{\geq 0}$ defined by $A_i=\mathbf{1}\, \mathbf{e}_i^\top $ for $i\in \langle n \rangle$, i.e.
\[
A_1 = \begin{bmatrix} 
    1 & \dots & 0 \\
    \vdots  & \dots&\vdots \\
    1     & \dots &0 
    \end{bmatrix},\;\;\dots,\;\; A_n = \begin{bmatrix} 
     0 & \dots & 1 \\
     \vdots& \dots &\vdots \\
    0& \dots &1 
    \end{bmatrix},
\]
where $\mathbf{1}:=(1\dots,1)^\top$.
From straightforward computation it holds that $\rho(\cA)=1$. Computing, we have, for all $x\in \R^n_{\geq 0}$, $A_ix=x_i\mathbf{1}$, and thus considering the norm $g^\star_{\mathbf{1}}$ (the usual infinity norm) we have that, for all $i\in \langle n\rangle$ and for any $x\in \R^n_{\geq 0}$ $g^\star_{\mathbf{1}}(A_ix)\leq g^\star_{\mathbf{1}}(x)$, proving that $\rho_\cD(\cA)=\rho(\cA)$. In other words, dual copositive common Lyapunov norms provide an exact estimation of the JSR. It can be seen that $\rho_\cP(\cA)=n$, that is, by Theorem~\ref{thm:ConicRadiuss}, the worst possible estimate. The ``dual case'', i.e. considering $\cA^\top$, provides an example for which the primal norms provides an exact estimate, and the dual ones the worst possible. \hfill $\triangle$
\end{exmp} 

We now show that, considering multiple copositive primal/dual norms (or, more precisely, path-complete Lyapunov functions in these templates), we can provide an estimation of the JSR with arbitrary accuracy. Given $\cA:=\{A_1,\dots, A_M\}\subset \R^{n\times n}$ and a path-complete graph $\cG=(S,E)$ the quantity $\rho_{\cP,\cG}(\cA)$ and $\rho_{\cD,\cG}(\cA)$ are defined as the optimal values of the problems 

\begin{equation}\label{eq:LPprimalNorms}
\begin{aligned}
\rho_{\cP,\cG}(\cA):=\;\;\;\;\;\;&\inf_{v_1,\dots v_{|S|}\in \R^n_{>0},\gamma\geq 0} \gamma\\
&A_i^\top v_b-\gamma v_a\leq 0, \;\;\;\;\forall\, e=(a,b,i)\in E.
\end{aligned}
\end{equation}
and 
\begin{equation}\label{eq:LPDualNorms}
\begin{aligned}
\rho_{\cD,\cG}(\cA):=\;\;\;\;\;\;&\inf_{v_1,\dots v_{|S|}\in \R^n_{>0},\gamma\geq 0} \gamma\\
&A_iv_b-\gamma v_a\leq 0, \;\;\;\;\forall\, e=(a,b,i)\in E.
\end{aligned}
\end{equation}
We now define an important class of graphs that will be used in our JSR approximation scheme.
\begin{defn}[De Bruijn Graphs]\label{defn:DeBRunjii}
Given $M,l\in \N$  the \emph{(primal) De Bruijn graph of order $l-1$ (on the alphabet $\M$)} denoted by $\cG^l_{\emph{db}}=(S,E)$ is defined as follows: $S:=\M^{l-1}$ and, given any node $a=(i_1, \dots,i_{l-1})\in S$, we have $(a,b, j)\in E$ for every $b$ of the form $b=(i_2,\dots, i_{l-1},j)$, for any $j\in \M$.
\end{defn}
We note that $\cG^l_{\emph{db}}$ is complete (recall Definition~\ref{def:ComplateGraph}). The \emph{dual De Bruijn graph of order $l-1$ (on the alphabet $\M$)}, denoted by $(\cG^l_{\emph{db}})^\top$, is thus co-complete. For further discussion on the class of De Bruijn graphs see~\cite[Section 6]{AJPR:14}.

\begin{thm}[``Asymptotic'' Converse Lyapunov Theorem]\label{thm:LyaounovConverseTheorem}
Let $\cA=\{A_1,\dots, A_M\}\subset \R^{n\times n}_{\geq 0}$. Given any $l\in \N$, considering $\cG^l_{\emph{db}}=(S,E)$ the (primal) De Bruijn graph of order $l-1$ on $\M$, we have 
\begin{equation}\label{eq:asymtoticBounds}
\frac{1}{\sqrt[l]{n}}\rho_{\cD,\cG^l_{\emph{db}}}(\cA)\leq \rho(\cA)\leq \rho_{\cD,\cG^l_{\emph{db}}}(\cA).
\end{equation}
\end{thm}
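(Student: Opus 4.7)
The plan is to prove the two inequalities separately. The upper bound $\rho(\cA)\leq \rho_{\cD,\cG^l_{\text{db}}}(\cA)$ follows from the general Lyapunov principle: any feasible point $\{v_a\}_{a\in S}$, $\gamma$ of the linear program~\eqref{eq:LPDualNorms} yields, via the correspondence $V_a:=g^\star_{v_a}$ and the characterization in Item~(4$^d$) of Proposition~\ref{pro:DualProp}, a set of dual copositive norms satisfying the Lyapunov inequalities~\eqref{eq:LyapunovInequalities} for the rescaled system $\cA/\gamma$ on the path-complete graph $\cG^l_{\text{db}}$. By Theorem~\ref{Thm:PCLFimpliesStability}, the scaled system is stable, so $\rho(\cA)\leq \gamma$; taking the infimum over feasible $\gamma$ closes this direction.

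For the converse inequality $\rho_{\cD,\cG^l_{\text{db}}}(\cA)\leq \sqrt[l]{n}\,\rho(\cA)$, the plan is to build an explicit feasible LP point whose decay rate saturates the advertised bound up to $\varepsilon>0$. The first substep is to apply Theorem~\ref{thm:ConicRadiuss} (Item~2) to the set $\cA_l:=\{A_{i_l}\cdots A_{i_1}\mid (i_1,\dots,i_l)\in\M^l\}$ of $l$-fold products, which is again a set of nonnegative matrices satisfying $\rho(\cA_l)=\rho(\cA)^l$. This produces a vector $u\in\R^n_{>0}$ and a scalar $\beta$ with $\beta\leq n\rho(\cA)^l(1+\varepsilon)$ such that $B u\leq \beta u$ for every $B\in \cA_l$. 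Setting $\gamma:=\sqrt[l]{\beta}\leq \sqrt[l]{n}\,\rho(\cA)(1+\varepsilon)^{1/l}$ is the rate we aim at.

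The second substep is the heart of the argument: using $u$ and $\gamma$, one assigns to each node $a=(i_1,\dots,i_{l-1})\in\M^{l-1}$ a positive vector $v_a$ built from the matrices $A_{i_1},\dots,A_{i_{l-1}}$ in such a way that concatenating the per-edge inequality $A_j v_b\leq \gamma v_a$ along a length-$l$ path in the De~Bruijn graph telescopes exactly to the common-Lyapunov-type inequality $B u\leq \beta u$ guaranteed by step~1. The key combinatorial observation enabling this is that paths of length $l$ in $\cG^l_{\text{db}}$ are in bijection with words $\tau\in\M^l$, and the terminal node of any such path depends only on the last $l-1$ letters of $\tau$, so the constraints decouple into one common-Lyapunov inequality per word $\tau\in\M^l$. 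One then verifies the per-edge constraint $A_j v_{b(a,j)}\leq \gamma v_a$ by invoking the monotonicity of nonnegative matrices together with the chosen bound on $u$.

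The hard part is the explicit construction of $v_a$ in the second substep: natural first guesses such as $v_a=A_{i_{l-1}}\cdots A_{i_1} u$ fail directly because the edge label $j$ simultaneously picks out the multiplicative factor $A_j$ and forms the last coordinate of the successor node $b=(i_2,\dots,i_{l-1},j)$, creating a spurious repeated $A_j$ in the inequality. Overcoming this requires a construction that exploits the closure properties of the template $\cD$ established in Proposition~\ref{pro:DualProp} (in particular the maximum closure) to absorb this redundancy, together with the monotonicity of nonnegative matrices, so that telescoping only produces genuine length-$l$ products from $\cA_l$ to which the bound $\beta$ may be applied.
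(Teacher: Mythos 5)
Your overall strategy coincides with the paper's: the upper bound is the standard path-complete Lyapunov argument, and the lower bound is obtained by passing to the set $\cA^l$ of length-$l$ products, invoking Theorem~\ref{thm:ConicRadiuss} to obtain a common dual copositive Lyapunov vector $v>0$ with $A_{i_l}\cdots A_{i_1}v\leq\gamma^l v$ for all words, and then manufacturing from $v$ a feasible point of~\eqref{eq:LPDualNorms} on the De Bruijn graph with rate $\gamma$. However, you stop exactly at the step you yourself call the heart of the argument: the explicit assignment of a vector $v_a$ to each node $a=(i_1,\dots,i_{l-1})$ is never written down, and the hint you give for finding it points the wrong way. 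You propose absorbing the difficulty via the \emph{max}-closure of the template $\cD$; but a componentwise maximum does not propagate through left-multiplication by a nonnegative matrix in the direction you need (one only has $A\max(x,y)\geq\max(Ax,Ay)$), so a max-based $v_a$ does not obviously satisfy the per-edge constraint. The paper instead takes the geometric sum
\[
v_{(i_1,\dots,i_{l-1})}:=v+\tfrac{1}{\gamma}A_{i_{l-1}}v+\tfrac{1}{\gamma^2}A_{i_{l-1}}A_{i_{l-2}}v+\dots+\tfrac{1}{\gamma^{l-1}}A_{i_{l-1}}\cdots A_{i_1}v,
\]
which is strictly positive (its first summand is $v$) and for which the edge inequality $A_j v_{(i_1,\dots,i_{l-1})}\leq\gamma\, v_{(i_2,\dots,i_{l-1},j)}$ is verified by a one-line computation: multiplying the sum by $A_j$ reproduces, after division by $\gamma$, every summand of $v_{(i_2,\dots,i_{l-1},j)}$ except the leading term $\gamma v$, and the single leftover term $\gamma^{-(l-1)}A_jA_{i_{l-1}}\cdots A_{i_1}v$ is a genuine length-$l$ product, bounded by $\gamma v$ thanks to the hypothesis on $\cA^l$. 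Linearity ($A(x+y)=Ax+Ay$) is what makes the cancellation exact; no closure-under-max property of $\cD$ is needed.

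A second, more minor, inaccuracy: the verification is not a telescoping \emph{along a length-$l$ path} with the constraints decoupling into one common-Lyapunov inequality per word of $\M^l$, as you describe. Each single edge constraint is checked in isolation, and the length-$l$ product bound is consumed inside that single check. Your picture would additionally require arguing that satisfying the concatenated inequalities forces each individual edge inequality, which is the direction actually needed for feasibility of~\eqref{eq:LPDualNorms}. So, as written, the proposal correctly frames the reduction but leaves a genuine gap precisely where the construction and its verification should be.
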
 

\begin{proof}
 Recalling Definition~\ref{defn:DeBRunjii} and Proposition~\ref{pro:DualProp}, $\cG^l_{\emph{db}}=(S,E)$ leads to the inequalities:
\begin{equation}\label{eq:InequalitiesGraph}
\begin{aligned}
v_{(i_1,\dots,i_{l-1})}&>0,\;\; \;\forall (i_1,\dots, i_{l-1})\in \M^{l-1}\\
A_j v_{(i_1, \dots, i_{l-1})}&\leq \gamma v_{(i_2,\dots,i_{l-1},j)},\;\; \;\forall (i_1,\dots, i_{l-1})\in \M^{l-1},\;\forall \,j\in \M.
\end{aligned}
\end{equation}
We now prove that $\rho_{\cP,\cG^l_{\emph{db}}}(\cA)$, that is the minimum $\gamma$ for which \eqref{eq:InequalitiesGraph} is feasible, satisfies the inequalities in~\eqref{eq:asymtoticBounds}.
The inequality $\rho(\cA)\leq \rho_{\cP,\cG^l_{\emph{db}}}(\cA)$ is straightforward. Consider now $\cA^l$, the set of all the possible products of matrices in $\cA$ of length $l$. By Theorem~\ref{thm:ConicRadiuss}, we have $\frac{1}{n}\rho_{\cP}(\cA^l)\leq \rho(\cA^l)=\rho(\cA)^l$ and thus
\begin{equation}\label{eq:InequalityProof}
\frac{1}{\sqrt[l]{n}}\sqrt[l]{\rho_{\cP}(\cA^l)}\leq \rho(\cA).
\end{equation}
We suppose thus that $v\in \R^n_{>0}$ is such that $g^\star_v$ is a common copositive dual Lyapunov norm for $\cA^l$ with decay $\gamma^l>0$, i.e.
\[
\begin{aligned}
v&>0,\\
A_{i_l}\cdots A_{i_1} v-\gamma^l v&\leq 0,\;\;\;\forall (i_1,\dots,i_l)\in \M^l.
\end{aligned}
\]
For any $(i_1,\dots, i_{l-1})\in \M^{l-1}$, defining
\[
v_{(i_1,\dots, i_{l-1})}:=v+\frac{1}{\gamma}A_{l-1} v+\frac{1}{\gamma^2}A_{i_{l-1}}A_{i_{l-2}} v+\dots +\frac{1}{\gamma^{l-1}}A_{i_{l-1}}\cdots A_{i_1}  v,
\]
it is easy to see that inequalities in \eqref{eq:InequalitiesGraph} are satisfied. We have thus proved that $\rho_{\cD,\cG^l_{\emph{db}}}(\cA)\leq \sqrt[l]{\rho_{\cD}(\cA^l)}$, and recalling~\eqref{eq:InequalityProof} we conclude.
\end{proof}
A similar proof is used in~\cite[Theorem 6.2]{AJPR:14} for the template of quadratic functions, obtaining an approximation guarantee for a hierarchy of semidefinite programs.
\begin{rem}\label{Rem:DualityDeBrunjii}
We note that, since the graph $\cG^l_{\emph{db}}$ is complete, recalling Proposition~\ref{prop:CompletenessandMAxMin}, if the inequalities in~\eqref{eq:InequalitiesGraph} are feasible (for a certain $\gamma>0$), we also have that the function defined by
\[
V_{\cG^l_{\emph{db}}}(x):=\min_{(i_1,\dots, i_{l-1})\in \M^{l-1}}\left\{g^\star_{v_{(i_1,\dots, i_{l-1})}}(x)\right\}
\]
is a common Lyapunov function for~\eqref{eq:PosSystem}. 
From Theorem~\ref{thm:LyaounovConverseTheorem} we can obtain its dual result: applying again the duality relation in~\eqref{eq:DualityforPrimalDualCopositive} we have that, for any $l\in \N$,
\[
\frac{1}{\sqrt[l]{n}}\rho_{\cP,(\cG^l_{\emph{db}})^\top}(\cA)\leq \rho(\cA)\leq \rho_{\cP,(\cG^l_{\emph{db}})^\top}(\cA),
\]
where $(\cG^l_{\emph{db}})^\top$ denote the dual De Bruijn graph of order $l-1$. Moreover, the conditions encoded in $(\cG^l_{\emph{db}})^\top$ will define, again by duality, a common Lyapunov function for~\eqref{eq:PosSystem}, in the form of a \emph{max} of primal copositive norms. We note that (convex hull of) min of dual copositive norms and max of primal copositive norms are special cases of \emph{polyhedral functions}. In this view, Theorem~\ref{thm:LyaounovConverseTheorem} states in particular that, if the system~\eqref{eq:PosSystem} is asymptotically stable, then there exists a copositive polyhedral common Lyapunov function. This is consistent  with (and strenghtens) the universality of polyhedral Lyapunov functions for switching systems proved in~\cite{BM1999}, see also~\cite{AJ2020}. \hfill $\triangle$
\end{rem}
\begin{rem}[Numerical approximation of the JSR via De Bruijn Hierarchy]\label{rem:DeBrunjii}
Given $\cA=\{A_1,\dots,A_M\}\subset\R^{n\times n}_{\geq 0}$, Theorem~\ref{thm:LyaounovConverseTheorem} (and the subsequent Remark~\ref{Rem:DualityDeBrunjii})
suggests the following numerical scheme in order to approximate $\rho(\cA)$ with arbitrary precision, using the hierarchies of primal and dual De Bruijn graphs. This scheme is summarized in the following pseudo-algorithm.
\begin{itemize}[leftmargin=2cm]
\item [(\emph{Init.}):]Fix a margin $\varepsilon>0$, set $l=1$, $\underline \gamma=0$, $\overline \gamma=+\infty$.
\item[] {\bf since} $(\overline \gamma-\underline \gamma\geq \varepsilon)$, 
\item[(\emph{Step $l$}):]  Solve the linear program in~\eqref{eq:LPDualNorms} for $\cG^l_{db}$. \\Set $\underline \gamma\leftarrow\max\{\underline{\gamma},\;\;\frac{1}{\sqrt[l]{n}}\rho_{\cD,\cG^l_{\emph{db}}}(\cA)\}$  and \\$\overline{\gamma}\leftarrow\min\{\overline{\gamma},\;\;\rho_{\cD,\cG^l_{\emph{db}}}(\cA)\}$.
\item[(\emph{Step $l^d$}):] Solve the linear program in~\eqref{eq:LPprimalNorms} for $(\cG^l_{db})^\top$.\\ 
Set $\underline \gamma\leftarrow\max\{\underline{\gamma},\;\;\frac{1}{\sqrt[l]{n}}\rho_{\cP,(\cG^l_{\emph{db}})^\top}(\cA)\}$  and \\$\overline{\gamma}\leftarrow\min\{\overline{\gamma},\;\;\rho_{\cP,(\cG^l_{\emph{db}})^\top}(\cA)\}$.
\item[] $l\leftarrow l+1$.
\end{itemize} 
This procedure allows us, once a confidence margin $\varepsilon>0$ is chosen, to provide tight estimations  of JSR of nonnegative matrices. Other stopping criteria can be considered, as for example the condition $(\overline \gamma <1)$ which ensures asymptotic stability of~\eqref{eq:PosSystem}, or $(\underline \gamma>1)$ which is an instability certificate for~\eqref{eq:PosSystem}. \hfill $\triangle$
\end{rem}
\subsection{Numerical example}\label{subsection:NumericalExample}

In this last section, we consider a positive switching system as in~\eqref{eq:PosSystem} and we provide the following analysis: first, given a particular path-complete graph $\cG$, we see how, when considering dual copositive norms, the estimation of the JSR is improved considering the max-lift $\cG_{max}$, in line with Theorem~\ref{thm:ValidLiftDUalsNorms}. Secondly, applying the idea of Remark~\ref{rem:DeBrunjii}, we provide an accurate estimation of the joint spectral radius. 
\begin{exmp}\label{ex:NumericalExample}
We consider the positive switching system~\eqref{eq:PosSystem} defined by $\cA=\{A_1,A_2\}\subset\R^{3 \times 3}_{\geq 0}$ with

\begin{equation}\label{Eq:ExNumerique}
A_1=  \begin{bmatrix} 0.2 & 0 & 0\\ 0.6 & 0.6 & 0.5\\ 0.6 & 0.3 & 0.2 \end{bmatrix}, \,\,\,A_2= \begin{bmatrix} 0.1 & 0.2 & 0.3\\0.2 & 0 & 0.5\\ 0.1  & 0.6 & 0.7 \end{bmatrix}.
\end{equation}
First, in order to approximate $\rho(\cA)$, we solve the problem in~\eqref{eq:LPDualNorms} for $\cG_5$ in Figure~\ref{Fig:InitialGraphExNum}, obtaining $\rho_{\cG_5,\cD}(\cA)=1.3075$. We consider the max lift  $(\cG_5)_{\max}$ and in particular we select a path-complete and strongly connected component of $(\cG_5)_{\max}$ given by $\cG_6$ in Figure~\ref{Fig:LiftedGraphExNum}. We know, by Theorem~\ref{thm:ValidLiftDUalsNorms} and Proposition~\ref{Prop:InegalityHoldsForSubgraph} that $\cG_5\leq_{\cD,\cL} \cG_6$ and thus we expect that $\rho_{\cG_6,\cD}(\cA)\leq \rho_{\cG_5,\cD}(\cA)$ which is confirmed, since solving~\eqref{eq:LPDualNorms}, we obtain $\rho_{\cG_6,\cD}(\cA)=1.2716$. It is interesting to note how the graph $\cG_6$, although it reduces the number of decision variables and inequalities with respect to the conditions encoded in $\cG_5$, provides a better estimation of the JSR. Actually, given a positive system, we know that $\cG_6$ will provide at worst the same estimation as $\cG_5$ and for some particular cases as (\ref{Eq:ExNumerique}), $\cG_6$ will provide a strictly better approximation than $\cG_5$. This highlights that, given a particular path-complete structure and a template, the lifting approach can provide better estimation of the joint spectral radius while decreasing the number of Lyapunov inequalities and decision variables.

Concluding, we provide upper and lower bounds for $\rho(\cA)$ using the idea of Remark~\ref{rem:DeBrunjii}. For simplicity, we stop at the fourth iteration of the numerical scheme (and thus considering until the primal and dual De Bruijn graphs of order $3$) obtaining the following results:
\renewcommand{\arraystretch}{1.3}
\begin{center}
\begin{tabular}{ |c|c|c|c|c|c|c|c|c|} 
 \hline
 Steps: & $ (1)$ & $(1)^d$ & $(2)$ & $(2)^d$ & $(3)$ & $(3)^d$& $(4)$ & $(4)^d$ \\ 
 \hline
 $\rho_\cG$ & $1.445$ & $1.341$ & $1.445$ & $1.070$ & $1.410$ & $1.070$ & $1.402$ & $1.070$\\
 \hline 
$\underline \gamma$ & $0.482$ & $0.482$ & $0.834$ & $0.834$ & $0.978$ & $0.978$ & $1.065$ & $1.065$ \\ 
 \hline
 $\overline \gamma$ & $1.445$ & $1.341$ & $1.341$ & $1.070$ & $1.070$ & $1.070$ & $1.070$ & $1.070$  \\  
 \hline

\end{tabular}
\end{center}
In this table, in the line denoted by $\rho_\cG$ we reported the optimal values of the LPs described by~\eqref{eq:LPprimalNorms},~\eqref{eq:LPDualNorms} for the corresponding (primal and dual) De Bruijn graphs. We have thus  proven that $\rho(\cA)\in [1.065, 1.070]$, having an instability certificate for the positive switching system~\eqref{eq:PosSystem} defined by $\cA$. It is interesting to note how, in this particular case, the conditions arising from the primal De Bruijn graphs and the template of dual copositive norms provide better upper bounds for the JSR. 
\hfill $\triangle$
\end{exmp}

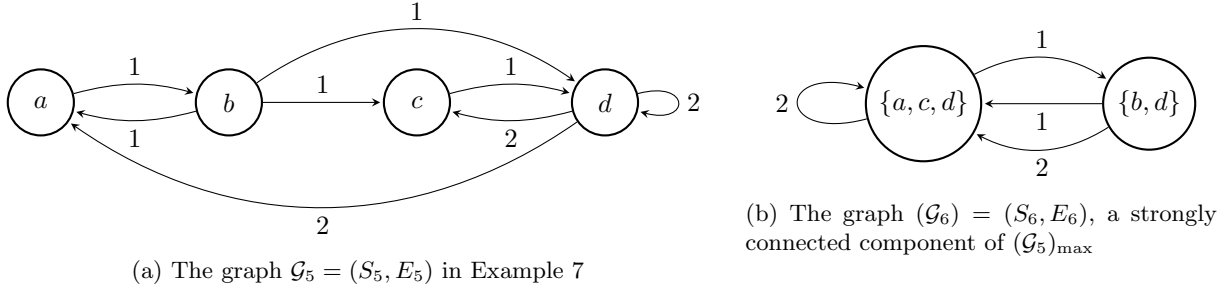
\begin{figure}[t!]
\begin{subfigure}{0.61\linewidth}
  \color{black}
  \centering
  \begin{tikzpicture}%
  [>=stealth,
  shorten >=1pt,
  node distance=1cm,
  on grid,
  auto,
  every state/.style={draw=black, fill=white,thick}
  ]
  \node[state] (node1)                 {$a$};
  \node[state] (node2) [right=of node1, xshift=1.5cm] {$b$};
  \node[state] (node3) [right =of node2, xshift=1.5cm]{$c$};
  \node[state] (node4) [right =of node3, xshift=1.5cm]{$d$};
  \path[->]
  (node4) edge[loop right=60]     node                      {2} (node4)
  (node1) edge[bend left=15]     node                      {1} (node2)
  (node2) edge[bend left=15]     node                      {1} (node1)
  (node2) edge[bend left=0]     node                      {1} (node3)
    (node2) edge[bend left=35]     node                      {1} (node4)
    (node3) edge[bend left=15]     node                      {1} (node4)
    (node4) edge[bend left=15]     node                      {2} (node3)
    (node4) edge[bend left=35]     node                      {2} (node1)
;
  \end{tikzpicture}
  \caption{The graph $\cG_5 = (S_5,E_5)$ in Example~\ref{ex:NumericalExample}}
  \label{Fig:InitialGraphExNum}
  \end{subfigure}
  \begin{subfigure}{0.38\linewidth}
  \color{black}
  \centering
  \begin{tikzpicture}%
  [>=stealth,
  shorten >=1pt,
  node distance=1cm,
  on grid,
  auto,
  every state/.style={draw=black, fill=white,thick}
  ]
  \node[state] (node1)         {$\{a,c,d\}$};
  \node[state] (node2) [right=of node1, xshift=2cm] {$\{b,d\}$};
  \path[->]
  (node1) edge[loop left=180]     node                      {2} (node1)
  (node1) edge[bend left=30]     node                      {1} (node2)
  (node2) edge[bend left=0]     node                      {1} (node1)
  (node2) edge[bend left=30]     node                      {2} (node1)
;
  \end{tikzpicture}
  \caption{The graph $(\cG_6)=(S_6,E_6)$, a strongly connected component of $(\cG_5)_{\max}$}
\label{Fig:LiftedGraphExNum}
  \end{subfigure}
  \caption{The graphs $\cG_5$ and $\cG_6$ in Example~\ref{ex:NumericalExample}}
\end{figure}

\section{Conclusion}

Path-complete stability criteria for switching systems are based on two main components: the path-complete graph, which encodes the required Lyapunov inequalities, and the template, the set in which the candidate Lyapunov functions are selected.
In this article we provided new results concerning the comparison problem for path-complete Lyapunov conditions. We introduced new formal transformations of path-complete graphs, called lifts, which allow us to establish order relations between graphs. We analyzed how the effectiveness of these lifts strongly depends on the closure properties of the chosen template. This allowed us to generalize previous results and to provide a unifying framework which enables for finer comparison criteria between path-complete techniques. As particular case study, we thoroughly analyzed the template of primal and dual copositive norms, which provided a handy but yet intriguing framework in order to provide new stability results, with applications to the stability analysis of positive switching systems. The results have been validated with several examples. For the future, we plan to proceed further in this analysis, with particular attention to the template of quadratic functions, which is probably the most common (for both theoretical and numerical reasons) candidate Lyapunov functions template in control theory.

\section*{Acknowledgements}
Raphaël M. Jungers is a FNRS honorary Research Associate. This project has received funding from the European Research Council (ERC) under the European Union's Horizon 2020 research and innovation programme under grant agreement No 864017 - L2C. RJ is also supported by the Walloon Region, the Innoviris Foundation, and the FNRS (Chist-Era Druid-net).

\appendix
\section{Convex Duality: Gauge Functions, Polar Sets, and Dual Norms}\label{Sec:Appendix}

In this short Appendix we provide a complete summary of the convex duality results used in highlighting duality relations for the problem of comparison of stability criteria induced by graphs. These classical statements are in particular beneficial when studying the particular case of (primal and dual) copositive linear norms as template in studying stability of positive switching systems, which is the content of Section~\ref{sec:Positive}. The notation and terminology of this summary are introduced in~\cite[Part III]{RockConv}, in which the interested reader can find the corresponding formal proofs. For notational simplicity we develop the theory on $\R^n$; the corresponding statements for the self-dual cone $\R^n_{\geq 0}$ (as in Section~\ref{sec:Positive}) are straightforwardly obtained, mutatis mutandis.
 We consider subsets of $\R^n$ satisfying the following properties.
\begin{defn}\label{ass:MainSets}
Given $n\in \N$, $\cK(\R^n)$ denotes the family of sets $C\subset \R^n$ such that $C$ is closed, bounded, convex, symmetric ($x\in C$ if and only if $-x\in C$) and $0\in \inn(C)$. \hfill $\triangle$
\end{defn}
\begin{proper}
Consider $C_1,C_2\in \cK(\R^n)$, $A\in \R^{n\times n}$ invertible. Then:
\begin{enumerate}
    \itemsep0em 
\item \emph{(Sum):} $C_1+C_2\in \cK(\R^n)$,
\item\emph{(Intersection):} $C_1\cap C_2\in \cK(\R^n)$,
\item \emph{(Convex Hull of Union):} $C_1 \ssquare C_2 :=\co\{C_1\cup C_2\}\in \cK(\R^n)$,
\item \emph{(Inverse Sum):} $C_1 \sharp C_2:=\bigcup_{\lambda\in [0,1]}\lambda C_1\cap (1-\lambda)C_2\;\in \cK(\R^n)$,
\item \emph{(Linear Transform):} $AC_1\in \cK(\R^n)$.
\end{enumerate}
\end{proper}
\begin{defn}[Polar Sets]\label{Defn:Polar}
Given $C\subset \R^n$ convex, closed and such that $0\in C$, we define the \emph{polar} of $C$, denoted by $C^\circ$, by
\[
C^\circ:=\{x\in \R^n\;\vert\;\sup_{y\in C}\inp{y}{x}\leq 1 \}.
\] 
It can be seen that $C^\circ$ is closed, convex and $0\in C^\circ$ and moreover, $(C^\circ)^\circ=C$.\hfill $\triangle$
\end{defn}
It is easy to see that $C\in \cK(\R^n)$ if and only if $C^\circ\in \cK(\R^n)$. Moreover we have the following relations:
\begin{lem}\label{lemma:SetsProperties}
Consider $C_1,C_2\in \cK(\R^n)$, $A\in \R^{n\times n}$. Then
\begin{enumerate}
    \itemsep0em 
    \item $C_1\subset C_2 \,\,\Leftrightarrow \,\, C_2^\circ \subset C_1^\circ$,
    \item For every $\gamma>0$, $(\gamma C_1)^\circ=\frac{1}{\gamma}C_1^\circ$,
\item $(C_1+C_2)^\circ=C_1^\circ\sharp C_2^\circ$,
\item $(C_1\sharp C_2)^\circ=C_1^\circ +C_2^\circ$,
\item $(C_1\cap C_2)^\circ=C_1^\circ \ssquare C_2^\circ$,
\item $(C_1\ssquare C_2)^\circ=C_1^\circ \cap C_2^\circ$,
\item $(AC_1)^\circ=A^{-\top}\,C_1^\circ$.
\end{enumerate}
\end{lem}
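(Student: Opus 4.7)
The plan is to prove each identity by unwinding the definition
$C^\circ=\{x\in \R^n \mid \sup_{y\in C}\langle y,x\rangle\leq 1\}$
and then organizing the seven items into pairs linked by the bipolar theorem $(C^\circ)^\circ=C$, together with the stand-alone items (2) and (7). All sets that appear are in $\cK(\R^n)$, by the preceding Properties, so the bipolar theorem applies freely throughout.

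For (1), if $C_1\subset C_2$ then pointwise $\sup_{y\in C_1}\langle y,x\rangle\leq\sup_{y\in C_2}\langle y,x\rangle$, so $C_2^\circ\subset C_1^\circ$; the converse follows by applying this monotonicity to $C_2^\circ\subset C_1^\circ$ and invoking the bipolar theorem. Item (2) is a one-line substitution: $x\in(\gamma C_1)^\circ$ iff $\gamma\sup_{z\in C_1}\langle z,x\rangle\leq 1$ iff $\gamma x\in C_1^\circ$. Item (7) is analogous via the adjoint, since $x\in(AC_1)^\circ$ iff $\sup_{z\in C_1}\langle z,A^\top x\rangle\leq 1$ iff $A^\top x\in C_1^\circ$.

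The heart of the argument is (3). For the inclusion $(\supseteq)$: if $x\in\lambda C_1^\circ\cap(1-\lambda)C_2^\circ$ for some $\lambda\in[0,1]$, then any $y_1+y_2\in C_1+C_2$ satisfies $\langle y_1+y_2,x\rangle=\langle y_1,x\rangle+\langle y_2,x\rangle\leq \lambda+(1-\lambda)=1$. For $(\subseteq)$: given $x\in(C_1+C_2)^\circ$, set $\alpha:=\sup_{y\in C_1}\langle y,x\rangle$ and $\beta:=\sup_{y\in C_2}\langle y,x\rangle$; since $0\in C_i$ both quantities are nonnegative, and taking the two suprema independently forces $\alpha+\beta\leq 1$, so the choice $\lambda=\alpha\in[0,1]$ puts $x\in\lambda C_1^\circ\cap(1-\lambda)C_2^\circ\subset C_1^\circ\sharp C_2^\circ$. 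Item (4) then follows by applying (3) to the polar sets $C_i^\circ\in\cK(\R^n)$ and taking polars once more, using bipolarity to unwrap both sides.

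Item (6) is a direct double application of the definition: polar depends only on the closed convex hull of its argument, so
\[
(C_1\ssquare C_2)^\circ=(C_1\cup C_2)^\circ=\{x\mid \sup_{y\in C_1\cup C_2}\langle y,x\rangle\leq 1\}=C_1^\circ\cap C_2^\circ,
\]
the last step using the elementary fact that the supremum over a union is the maximum of the two suprema. Item (5) is then obtained from (6) by the same bipolar trick: apply (6) to $C_1^\circ$ and $C_2^\circ$ and take polars of the resulting identity. The main obstacle I expect is the $(\subseteq)$ direction of (3), which is the only place requiring a genuinely constructive choice of parameter $\lambda$; the remaining items either follow by direct substitution or cascade from (3) and (6) via the bipolar theorem, with the inclusions $\cK(\R^n)\ni C_1\Rightarrow C_1^\circ\in \cK(\R^n)$ (and closedness of sums, intersections, convex hulls of unions) guaranteeing that every bipolar step is legitimate.
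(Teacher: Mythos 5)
Your proposal is correct, but note that the paper itself gives no proof of this lemma: the appendix explicitly states that these are classical facts whose formal proofs are to be found in \cite[Part III]{RockConv}. So there is no in-paper argument to compare against; what you have written is a self-contained derivation that the authors chose to outsource. Your overall architecture is sound and economical: prove (1), (2), (7) by direct substitution into the definition of the polar, prove (3) and (6) directly, and obtain (4) and (5) from them by the bipolar theorem, using the closure properties of $\cK(\R^n)$ to justify that each bipolar step applies. The one place that deserves a little more care is the $(\subseteq)$ direction of (3): your choice $\lambda=\alpha$ works, but the boundary cases $\alpha=0$ and $\alpha=1$ silently use the hypothesis $0\in\inn(C_i)$ (which forces $\alpha=0\Rightarrow x=0$, and likewise rules out $\alpha=1$ with $x\neq 0$ since then $\beta>0$); you should say this explicitly, since $0\cdot C_1^\circ=\{0\}$ and the membership $x\in\lambda C_1^\circ\cap(1-\lambda)C_2^\circ$ would otherwise fail. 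Similarly, in (7) the formula $A^{-\top}C_1^\circ$ only makes sense for invertible $A$ (as in the Properties preceding the lemma), so your equivalence $A^\top x\in C_1^\circ\Leftrightarrow x\in A^{-\top}C_1^\circ$ should be flagged as using that assumption. With those two clarifications, the proof is complete and is essentially the standard Rockafellar argument the authors are citing.
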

There is a $1$-to-$1$ correspondence between sets in $\cK(\R^n)$ and norms on $\R^n$. This correspondence is induced  by the unit-sublevel sets of norms and by the Gauge functions of sets (respectively) in $\cK(\R^n)$, as recalled in what follows.

\begin{defn}
Given a set $C\in \cK(\R^n)$ we define the \emph{Gauge function associated to $C$},  $g(\,\cdot\,\vert \, C):\R^n\to \R$ by
\[
g(\,x\,\vert\,C):=\inf\{\gamma\in \R\;\vert\; x\in \gamma C,\;\gamma\geq 0\}.
\]
\end{defn}
Let us consider 
$
\cV(\R^n):=\{f:\R^n\to \R\;\vert\; \text{$f$ is a norm}\},
$
and, given $f\in \cV(\R^n)$ denote with
$
\B_f:=\{x\in \R^n\;\vert\;f(x)\leq 1\},
$
the \emph{unit ball} of $f$.
\begin{lem}
If $f\in \cV(\R^n)$ then $\B_f\in \cK(\R^n)$, and moreover
\[
f(x)=g(x\,\vert\, \B_f),\;\;\forall \,x\in \R^n.
\]
Conversely, for any $C\in \cK(\R^n)$, $g(\cdot\,\vert\,C)\in \cV(\R^n)$. More esplicilty,
$
\cV(\R^n)=\left\{g(\,\cdot\,\vert\,C):\R^n\to \R\,\,\vert\,\,C\in \cK(\R^n)\right\}$ and $
\cK(\R^n)=\left\{\B_f\subset \R^n\,\,\vert\,\,f\in \cV(\R^n)\right\}$.
\end{lem}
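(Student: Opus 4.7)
The plan is to establish the two directions of the correspondence separately and then observe that the two explicit set equalities follow at once. For the first direction, I would fix $f\in \cV(\R^n)$ and verify each defining property of $\cK(\R^n)$ for $\B_f$: continuity of any norm (which itself follows from the triangle inequality and the fact that $f(x)\leq f(\mathbf{e}_1)|x_1|+\dots +f(\mathbf{e}_n)|x_n|$) gives closedness of $\B_f$; convexity follows from $f(\lambda x +(1-\lambda)y)\leq \lambda f(x)+(1-\lambda)f(y)\leq 1$ whenever $x,y\in \B_f$ and $\lambda\in [0,1]$; symmetry follows from $f(-x)=f(x)$, which itself is a consequence of positive homogeneity; boundedness follows from the equivalence of norms on $\R^n$ (the quantity $\min_{\|x\|_2=1} f(x)$ is strictly positive by compactness, which also yields $0\in \inn(\B_f)$).

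Next, I would verify $f(x)=g(x\,\vert\,\B_f)$ for all $x\in \R^n$. The idea is that, by positive homogeneity of $f$, the condition $x\in \gamma \B_f$ with $\gamma>0$ is equivalent to $f(x)\leq \gamma$, so taking infimum over admissible $\gamma\geq 0$ yields exactly $f(x)$; the case $x=0$ is immediate since $0\in \B_f$ and $f(0)=0$.

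For the converse direction, fix $C\in \cK(\R^n)$ and show that $g(\,\cdot\,\vert\,C)$ is a norm. Positive homogeneity follows directly from the definition by a change of variable $\gamma\mapsto \lambda \gamma$. The triangle inequality is the key convex-analytic step: given $x,y\in \R^n$ and $\alpha> g(x\,\vert\,C)$, $\beta> g(y\,\vert\,C)$, one has $x/\alpha,y/\beta\in C$, and convexity of $C$ then yields $(x+y)/(\alpha+\beta)=\tfrac{\alpha}{\alpha+\beta}(x/\alpha)+\tfrac{\beta}{\alpha+\beta}(y/\beta)\in C$, which gives $g(x+y\,\vert\,C)\leq \alpha+\beta$; letting $\alpha\to g(x\,\vert\,C)$ and $\beta\to g(y\,\vert\,C)$ concludes. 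Symmetry $g(-x\,\vert\,C)=g(x\,\vert\,C)$ uses that $C$ is symmetric. Finally, definiteness ($g(x\,\vert\,C)=0\Rightarrow x=0$) relies on boundedness of $C$: if $x\neq 0$ and $x\in \gamma C$ for arbitrarily small $\gamma>0$, then $C$ would contain vectors of unbounded norm, a contradiction.

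Once both directions are in hand, the explicit set equalities $\cV(\R^n)=\{g(\,\cdot\,\vert\,C)\,\vert\,C\in \cK(\R^n)\}$ and $\cK(\R^n)=\{\B_f\,\vert\,f\in \cV(\R^n)\}$ follow by verifying the inverse relations on each side, i.e. $g(\,\cdot\,\vert\,\B_f)=f$ (already established) and $\B_{g(\,\cdot\,\vert\,C)}=C$ (a one-line check using that $C$ is closed and $0\in\inn(C)$, so $g(x\,\vert\,C)\leq 1$ iff $x\in C$). I expect no serious obstacle here since each item is standard; the most delicate point is the definiteness argument for $g(\,\cdot\,\vert\,C)$, where one must genuinely invoke boundedness of $C$ (and not just $0\in \inn(C)$), and the triangle inequality, where the use of convexity must be performed via an $\varepsilon$-approximation of the infima rather than on the infima themselves.
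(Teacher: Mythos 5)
Your proof is correct and follows the standard gauge-function/unit-ball correspondence; note that the paper deliberately states this lemma \emph{without proof}, deferring to Rockafellar's \emph{Convex Analysis} (Part III), so there is no in-paper argument to diverge from. Two small attributions could be tightened: $0\in\inn(\B_f)$ comes from the upper estimate $f(x)\leq M\|x\|_2$ (which you already derived for continuity), not from the positive minimum of $f$ on the unit sphere, and the hypothesis $0\in\inn(C)$ is what guarantees that $g(\cdot\,\vert\,C)$ is finite-valued on all of $\R^n$ --- a point worth stating explicitly since membership in $\cV(\R^n)$ requires a real-valued function.
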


\begin{defn}[Dual Norm]\label{defn:DualNorms}
Given $f\in \cV(\R^n)$, we denote the \emph{dual norm of $f$}, denoted by $f^\star:\R^n\to \R$ by
\[
f^\star(x):=\sup_{y\in \R^n\setminus \{0\}}\frac{\inp{y}{x}}{f(x)}=\sup_{y\in \R^n,\,f(y)=1}\inp{y}{x}.
\]
It can be seen that  $f^\star=g(\cdot\,\vert\, \B_f^\circ)$ (and thus $\B_{f^\star}=\B_f^\circ$) and $(f^\star)^\star=f$. \hfill $\triangle$
\end{defn}

\begin{proper}\label{propertyis:Convexfubctuib}
Given $f_1,f_2\in \cV(\R^n)$, and $A\in \R^{n\times n}$ invertible, we have
\begin{enumerate}
    \itemsep0em 
\item\emph{(Sum):} $f_1+f_2\in \cV(\R^n)$,
\item \emph{(Max):} $\max\{f_1,f_2\}\in \cV(\R^n)$,
\item \emph{(Infimal Convolution):} Defining $f_1\ssquare f_2:\R^n\to \R$ by
\[
f_1\ssquare f _2(x):=\inf_{x=x_1+x_2}\{f(x_1)+f(x_2)\} ,
\]
it holds that $f_1\ssquare f_2\in \cV(\R^n)$. (Note that $f_1\ssquare f_2=\co\{\min\{f_1,f_2\}\}$, where $\co(f)$ denote the largest convex function majorized by $f$),
\item \emph{(Inverse Summation):} Defining $f_1\sharp f_2:\R^n\to \R$ by
\[
f_1\sharp f_2(x):=\inf_{x=x_1+x_2}\{\max\{f_1(x_1),f_2(x_2)\},
\]
we have $f_1\sharp f_2\in \cV(\R^n)$,
\item \emph{(Linear Transform):} $f_1\circ A\in \cV(\R^n)$.
\end{enumerate}
\end{proper}
\begin{lem}[Correspondence with Unit Balls]\label{lemma:COrrespondenceBall}
Given $f_1,f_2\in \cV(\R^n)$ and $A\in \R^{n\times n}$, it holds that
\begin{enumerate}
    \itemsep0em 
    \item For every $\gamma>0$, $\B_{\gamma f_1}=\frac{1}{\gamma}\B_{f_1}$
\item $\B_{f_1+f_2}=\B_{f_1}\sharp \,\B_{f_2}$,
\item $\B_{f_1\sharp f_2}=\B_{f_1}+ \B_{f_2}$,
\item $\B_{\max\{f_1, f_2\}}=\B_{f_1}\cap \B_{f_2}$,
\item $\B_{f_1\ssquare f_2}=\B_{f_1}\ssquare \B_{f_2}$
\item $\B_{f_1\circ A}=A^{-1}\B_{f_1}$
    \end{enumerate}
\end{lem}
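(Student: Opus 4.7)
The plan is to verify each of the six identities separately, exploiting the one-to-one correspondence between $\cV(\R^n)$ and $\cK(\R^n)$ via $f \mapsto \B_f = \{x \in \R^n : f(x) \leq 1\}$. Items (1), (4), and (6) are essentially immediate from the definition of the unit ball together with basic properties of norms: for (1), $\gamma f_1(x) \leq 1$ iff $f_1(x) \leq 1/\gamma$, giving $\B_{\gamma f_1}=(1/\gamma)\B_{f_1}$; for (4), $\max\{f_1(x), f_2(x)\} \leq 1$ iff both $f_i(x) \leq 1$; for (6), using invertibility of $A$, $f_1(Ax) \leq 1$ iff $Ax \in \B_{f_1}$ iff $x\in A^{-1}\B_{f_1}$.

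For item (2), I would unfold the set operation $\B_{f_1} \sharp \B_{f_2} = \bigcup_{\lambda \in [0,1]} \lambda \B_{f_1} \cap (1-\lambda) \B_{f_2}$. A point $x$ belongs to this union iff there exists $\lambda \in [0,1]$ with $f_1(x) \leq \lambda$ and $f_2(x) \leq 1-\lambda$; choosing $\lambda = f_1(x)$ when $f_1(x) + f_2(x) \leq 1$ (and observing no such $\lambda$ exists otherwise) gives the equivalence with $f_1(x) + f_2(x) \leq 1$, i.e. $x \in \B_{f_1 + f_2}$.

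For items (3) and (5), which involve the $\sharp$ and $\ssquare$ operations on norms defined in Properties~\ref{propertyis:Convexfubctuib}, the task is to match the infimal decomposition defining the function value with the Minkowski sum / convex hull of union on the set side. For (3), $x \in \B_{f_1 \sharp f_2}$ iff $\inf_{x = x_1 + x_2} \max\{f_1(x_1), f_2(x_2)\} \leq 1$; the infimum is attained thanks to coercivity and continuity of the norms, yielding $x_1 \in \B_{f_1}$, $x_2 \in \B_{f_2}$ with $x_1 + x_2 = x$, so $x \in \B_{f_1} + \B_{f_2}$, and conversely any such decomposition certifies the inequality. For (5), using $\B_{f_1}\ssquare \B_{f_2} = \co(\B_{f_1} \cup \B_{f_2})$, a point $x \in \B_{f_1 \ssquare f_2}$ admits a decomposition $x = x_1 + x_2$ with $\alpha := f_1(x_1)$ and $\beta := f_2(x_2)$ satisfying $\alpha + \beta \leq 1$; I would then write $x = \alpha (x_1/\alpha) + \beta (x_2/\beta) + (1-\alpha-\beta)\cdot 0$ to exhibit $x$ as a convex combination of points in $\B_{f_1} \cup \B_{f_2}$ (using $0 \in \B_{f_1} \cap \B_{f_2}$), and reverse the computation for the other inclusion.

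The main subtlety will be the attainment of infima in (3) and (5) and handling degenerate cases where one of the components $x_1, x_2$ vanishes; both reduce to standard arguments using continuity and coercivity of norms, together with $0$ lying in every unit ball. A cleaner alternative would be to derive half of these identities from the others via polar duality, using $\B_{f^\star} = \B_f^\circ$ (Definition~\ref{defn:DualNorms}) and the set-theoretic polar identities in Lemma~\ref{lemma:SetsProperties} to transport each identity to its dual form.
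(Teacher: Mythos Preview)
The paper does not provide its own proof of this lemma: it is stated without proof in the Appendix as part of a summary of classical convex-analysis facts, with the reader referred to~\cite[Part III]{RockConv} for details. Your direct verification of each item is correct and is essentially the standard argument one would expect; the alternative you mention at the end (deriving half the identities from the others via polarity and Lemma~\ref{lemma:SetsProperties}) is also in the spirit of the Appendix and would work equally well.
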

From Lemma \ref{lemma:SetsProperties} and Lemma \ref{lemma:COrrespondenceBall} we trivially obtain the following statement.
\begin{lem}\label{lem:DualNormsAppendix}
Given $f_1,f_2\in \cV(\R^n)$ and $A\in \R^{n\times n}$ invertible, it holds that
\begin{enumerate}
    \itemsep0em 
    \item $\forall x\in \R^n,\,\,f_1(x)\leq f_2(x)\,\,\Leftrightarrow\,\,\forall x\in\R^n,\,\,f_2^\star(x)\leq f_1^\star(x),$
    \item For every $\gamma>0$, $(\gamma f_1)^\star=\frac{1}{\gamma}f_1^\star$
\item $(f_1+f_2)^\star=f_1^\star \;\sharp\; f_2^\star$,
\item $(f_1\sharp f_2)^\star=f_1^\star +f_2^\star$,
\item $(\max\{f_1,f_2\})^\star=f_1^\star\,\ssquare\, f_2^\star$,
\item $(f_1\ssquare f_2)^\star=\max\{f_1^\star,f_2^\star\}$,
\item $(f_1\circ A)^\star=f_1^\star\circ A^{-\top}$.
\end{enumerate}
\end{lem}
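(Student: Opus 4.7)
The plan is to use the bijective correspondence $f\mapsto \B_f$ between $\cV(\R^n)$ and $\cK(\R^n)$, together with the defining identity $\B_{f^\star}=\B_f^\circ$ from Definition~\ref{defn:DualNorms}, to reduce each of the seven norm identities to a set-level identity that has already been recorded in Lemma~\ref{lemma:SetsProperties}. Since a norm is determined by its unit ball, it suffices, for any two $h_1,h_2\in \cV(\R^n)$, to check that $\B_{h_1}=\B_{h_2}$; and similarly a pointwise inequality $h_1\leq h_2$ is equivalent to the reverse inclusion $\B_{h_2}\subseteq \B_{h_1}$.

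I would start with Item~1, since it sets the template: by the inclusion/pointwise-order correspondence, $f_1\leq f_2$ on $\R^n$ is equivalent to $\B_{f_2}\subseteq \B_{f_1}$, which by Item~1 of Lemma~\ref{lemma:SetsProperties} is in turn equivalent to $\B_{f_1}^\circ\subseteq \B_{f_2}^\circ$, i.e. $\B_{f_1^\star}\subseteq \B_{f_2^\star}$, i.e. $f_2^\star\leq f_1^\star$ pointwise. Items~2--7 are then handled by the same mechanical three-step recipe: (i) expand $\B_{\text{LHS}}$ using Lemma~\ref{lemma:COrrespondenceBall}; (ii) apply polarity together with the set-level identity from Lemma~\ref{lemma:SetsProperties} to rewrite this polar set in terms of $\B_{f_1}^\circ=\B_{f_1^\star}$ and $\B_{f_2}^\circ=\B_{f_2^\star}$; (iii) re-contract using Lemma~\ref{lemma:COrrespondenceBall} into the unit ball of the claimed RHS norm.

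For concreteness: Item~2 uses $\B_{\gamma f_1}=\tfrac{1}{\gamma}\B_{f_1}$ and $(\tfrac{1}{\gamma}C)^\circ=\gamma C^\circ$; Item~3 starts from $\B_{f_1+f_2}=\B_{f_1}\sharp\B_{f_2}$ and applies the polar identity $(C_1\sharp C_2)^\circ=C_1^\circ+C_2^\circ$, re-packaging $\B_{f_1^\star}+\B_{f_2^\star}$ as $\B_{f_1^\star\sharp f_2^\star}$; Item~4 is the dual of Item~3, starting from $\B_{f_1\sharp f_2}=\B_{f_1}+\B_{f_2}$ and $(C_1+C_2)^\circ=C_1^\circ\sharp C_2^\circ$. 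Items~5 and~6 are the analogous pair involving $\cap$ and $\ssquare$, using $\B_{\max\{f_1,f_2\}}=\B_{f_1}\cap\B_{f_2}$ and $\B_{f_1\ssquare f_2}=\B_{f_1}\ssquare\B_{f_2}$ together with Items~5--6 of Lemma~\ref{lemma:SetsProperties}. Finally Item~7 uses $\B_{f_1\circ A}=A^{-1}\B_{f_1}$ and the polar-of-linear-image identity $(AC)^\circ=A^{-\top}C^\circ$ (applied with $A$ replaced by $A^{-1}$) to obtain $A^\top \B_{f_1^\star}$, which by Lemma~\ref{lemma:COrrespondenceBall} equals $\B_{f_1^\star\circ A^{-\top}}$.

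There is no real obstacle here beyond keeping track of which operation dualizes to which: the main care is the $\sharp/+$ and $\cap/\ssquare$ swapping patterns, and, for Item~7, the fact that the linear-transform rule at the set level has a transpose-inverse while at the norm level the composition is with $A^{-\top}$, so one must invert $A\mapsto A^{-1}$ once to line up the two statements. Each of the seven lines is then a one-line chain of ball identities, so once Item~1 is written out as a prototype the remaining six can be stated in a short enumerated block without further argument.
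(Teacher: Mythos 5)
Your proposal is correct and follows exactly the route the paper takes: the paper states that Lemma~\ref{lem:DualNormsAppendix} is obtained trivially from Lemma~\ref{lemma:SetsProperties} and Lemma~\ref{lemma:COrrespondenceBall} via the correspondence $\B_{f^\star}=\B_f^\circ$, which is precisely your three-step ball-expansion/polarity/re-contraction recipe. Your write-up simply makes explicit the bookkeeping (including the $A\mapsto A^{-1}$ adjustment in Item~7) that the paper leaves to the reader.
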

These results can be generalized for finite numbers of norms as summarized in the following table.
\renewcommand{\arraystretch}{1.3}
\begin{center}
\begin{tabular}{ |c|c|c| c|} 
 \hline
 Operation & Unit Ball & Dual & Dual Unit Ball \\ 
 \hline
$\gamma f$ & $\frac{1}{\gamma}\B_f$ & $\frac{1}{\gamma}f^\star$ & $\gamma \B^\circ$
  \\ 
 \hline
 $\oplus_i f_i$ & $\sharp_i \B_{f_i}$ & $\sharp_i f_i^\star$ & $\oplus_i \B^\circ_{f_i}$ \\  
 \hline
$\sharp_i f_i$ & $\oplus_i \B_{f_i}$ &  $\oplus_i f^\star_i$ & $\sharp_i \B^\circ_{f_i}$\\
 \hline
 $\max_i\{f_1\}$ & $\bigcap_i \B_{f_i}$ & $\square_i f_i^\star$ & $\square_i \B_{f_i}^\circ$\\
 \hline
 $\square_i f_i$ & $\square_i \B_{f_i}$ & $\max_i\{f_i^\star\}$ & $\bigcap \B^\circ_{f_i}$\\
 \hline
 $f\circ A$ & $A^{-1}\B_f$ & $f^\star\circ A^{-\top}$ & $A^{\top}\B^\circ$\\
 \hline
\end{tabular}
\end{center}
Finally, for the path-complete formalism, the following result is particularly helpful.
\begin{lem}[Dual Inequality]\label{lemma:DualInequallity}
Consider $f_1,f_2\in \cV(\R^n)$ and $A\in \R^{n\times n}$, then
\[
f_2(Ax)\leq f_1 (x)\,\,\,\forall\, x\in \R^n\,\,\Leftrightarrow\,\,f_1^\star(A^\top x)\leq f_2^\star(x),\,\,\,\forall \,x\in \R^n.
\]
\end{lem}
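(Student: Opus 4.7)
The plan is to translate the pointwise norm inequalities into set-inclusion statements involving unit balls, and then exchange these inclusions with polar-set inclusions, using the identification $\B_{f^\star}=\B_f^\circ$ from Definition~\ref{defn:DualNorms}.

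First I would reformulate the left-hand side. Because $f_1,f_2$ are norms (hence positively homogeneous), the inequality $f_2(Ax)\leq f_1(x)$ for every $x\in \R^n$ is equivalent to the set inclusion
\[
A\,\B_{f_1}\;\subseteq\;\B_{f_2}.
\]
Indeed, the $(\Rightarrow)$ direction is immediate from the definition of unit ball, while the $(\Leftarrow)$ direction follows by scaling any $x\neq 0$ by $1/f_1(x)$.

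Next, I would dualize this inclusion via support functions. For any $z\in \R^n$, using the linear-transpose trick and the definition of the dual norm,
\[
\sup_{y\in \B_{f_1}}\inp{Ay}{z}\;=\;\sup_{y\in \B_{f_1}}\inp{y}{A^\top z}\;=\;f_1^\star(A^\top z),\qquad \sup_{w\in \B_{f_2}}\inp{w}{z}\;=\;f_2^\star(z).
\]
Since $\B_{f_2}$ is closed, convex and symmetric (i.e., lies in $\cK(\R^n)$), the inclusion $A\B_{f_1}\subseteq \B_{f_2}$ is equivalent to the corresponding pointwise inequality between their support functions: $f_1^\star(A^\top z)\leq f_2^\star(z)$ for every $z\in \R^n$. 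The $(\Rightarrow)$ direction is monotonicity of $\sup$, while the $(\Leftarrow)$ direction uses a Hahn--Banach/separation argument: if some $y^\star=Ay_0$ with $y_0\in\B_{f_1}$ failed to lie in $\B_{f_2}$, a separating $z$ would produce $f_1^\star(A^\top z)\geq \inp{z}{y^\star}>1\geq f_2^\star(z)$, contradicting the assumed dual inequality.

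Combining these two equivalences gives the lemma in one direction; the converse implication is obtained either by repeating the argument symmetrically, or more cleanly by invoking the forward direction (now proved) applied to $f_1^\star,f_2^\star\in \cV(\R^n)$ and $A^\top$, together with the involutions $(f^\star)^\star=f$ and $(A^\top)^\top=A$ recalled in Definition~\ref{defn:DualNorms}. The only mildly delicate point is the set-inclusion $\Leftrightarrow$ support-function inequality step, which does not require invertibility of $A$ thanks to the separation argument; this is the main conceptual obstacle and the reason we avoid a direct appeal to item 7 of Lemma~\ref{lemma:SetsProperties}, whose formula $(AC)^\circ=A^{-\top}C^\circ$ would otherwise force an invertibility hypothesis on $A$.
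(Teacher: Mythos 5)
Your proof is correct and follows essentially the same route as the paper's: both reduce the norm inequality to the unit-ball inclusion $A\,\B_{f_1}\subseteq\B_{f_2}$ and then dualize, the paper via polar sets and the bipolarity $(\B^\circ)^\circ=\B$, you via the equivalent language of support functions (which are exactly the dual norms) and a separation argument. Your closing observation---that the direct argument avoids the invertibility hypothesis that item 7 of Lemma~\ref{lemma:SetsProperties} would impose---is accurate and matches what the paper's proof implicitly does.
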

\begin{proof}
Recalling the definitions we have
$
f_2(Ax)\leq f_1(x),\,\,\forall \,x\in \R^n,
$
if and only if
\begin{equation}\label{eq:LemmaImpl1}
x\in \B_{f_1}\,\,\Rightarrow Ax\in \B_{f_2}.
\end{equation}
We show that~\eqref{eq:LemmaImpl1} is true if and only if
$
x\in \B^\circ_{f_2} \,\,\Rightarrow A^\top x\in \B^\circ_{f_1}$,
which is equivalent to $f_1^\star(A^\top x)\leq f_2^\star(x)$, $\forall \,x\in \R^n$.
Suppose \eqref{eq:LemmaImpl1} is true. Consider $x\in \B^\circ_{f_2}$, applying the definitions we have
\[
x\in \B^\circ_{f_2}\,\Leftrightarrow\, \inp{x}{y}\leq 1,\,\forall\,y\in \B_{f_2}\,\,\Rightarrow \,\,\inp{x}{Az}\leq 1\,\forall z\in \B_{f_1}\,\,\Leftrightarrow\,\,\inp{A^\top x}{z}\leq 1\,\forall z\in \B_{f_1}\,\,\Leftrightarrow\,\,A^\top x\in \B^\circ_{f_1}.
\]
The other direction is completely equivalent, once recalled that $(\B_{f_i}^\circ)^\circ=\B_{f_i}$.
\end{proof}

 \bibliographystyle{plain}
\bibliography{ArxivNAHS.bib}

\end{document}